\pdfoutput=1
\documentclass[11pt]{article}
\usepackage{amsmath,amssymb,amsthm}
\usepackage[margin=2.8cm]{geometry}
\usepackage[colorlinks=true,linkcolor=blue,citecolor=blue]{hyperref}
\newtheorem{theorem}{Theorem}[section]
\newtheorem{lemma}[theorem]{Lemma}
\newtheorem{corollary}[theorem]{Corollary}
\newtheorem{proposition}[theorem]{Proposition}
\theoremstyle{remark}
\newtheorem{remark}[theorem]{Remark}
\newcommand{\R}{\mathbb{R}}
\newcommand{\Z}{\mathbb{Z}}
\newcommand{\N}{\mathbb{N}}

\title{The basic tropical polynomials generate the semifield of\\ $r$-symmetric tropical rational functions\thanks{This work
 was partially supported by MEXT Leading Initiative for Excellent Young
 Researchers Grant Number JPMXS0320200347 and JSPS KAKENHI Grant Number
 JP26K17039.}}
\author{Susumu Kubo\thanks{Faculty of Informatics, Showa Women's University,
 Tokyo, Japan (\texttt{s-kubo@swu.ac.jp}), ORCID 0000-0003-1780-9677.}}
\date{}

\begin{document}
\maketitle

\begin{abstract}
Let the symmetric group $S_n$ act on the space of $n \times r$ real matrices
by permuting rows, so that orbits are multisets of $n$ points in $\R^r$. The
basic $r$-symmetric tropical polynomials form a family of
$\binom{n+r}{r}-1$ nonconstant invariants of degree at most $n$ that
separates orbits and embeds the orbit space bi-Lipschitzly. We prove that this family generates
the semifield of all $r$-symmetric tropical rational functions, answering a
question raised in [Kubo, J.~Pure Appl.\ Algebra 223 (2019) 72--85]. By
Derksen's finite generation criterion, the semiring of invariant tropical
\emph{polynomials} is not finitely generated when $r \ge 2$; at the level
of tropical \emph{rational} functions, Derksen showed that for every
permutation group $G \le S_N$ the invariant semifield is generated in
degree at most $N\, p_1 p_2 \cdots p_{|G|}$, with $p_i$ the $i$th prime
--- for the row action of $S_n$, degree $nr\, p_1 \cdots p_{n!}$. The
present result replaces this by generators of degree at most $n$. The
generating expression is a finite minimum with one term per way of
re-assembling a multiset from its sorted columns, each term carrying a
penalty, built from the basic values, that outweighs the error of a wrong
re-assembly by the bi-Lipschitz inequality. The same penalties describe
the image of the basic coordinate map exactly, as the zero set of a
single tropical rational function, and the construction yields an
expression algorithm. Subfamilies of the basic family containing the
single-column values generate if and only if they separate. Abstracted to
an arbitrary permutation group $G \le S_N$, the same mechanism generates
the invariant semifield in degree at most $\max\{N, \binom N2\}$, a
quadratic bound independent of the group order; combined with a
genericity theorem of Cahill, Iverson, Mixon, and Packer, it also yields
$2N + 1$ invariant tropical polynomials that separate orbits and $3N$
that generate, where at least $N$ are necessary for each task. The
quadratic bound is optimal: for the alternating group $A_N$, every
$A_N$-invariant tropical polynomial of degree less than $\binom N2$ is
$S_N$-invariant, so every family separating $A_N$-orbits contains a
member of degree at least $\binom N2$.
\end{abstract}

\noindent\textbf{Keywords:} tropical polynomial; max-plus algebra; symmetric
functions; generating sets; separating invariants; semifield; piecewise
linear function.\\
\textbf{MSC 2020:} 14T10; 13A50; 12K10; 05E05.

\section{Introduction}\label{sec:intro}

Let the symmetric group $S_n$ act on the space $\R^{n \times r}$ of
$n \times r$ matrices by permuting rows; orbits are multisets of $n$ points
in $\R^r$. The \emph{basic $r$-symmetric tropical polynomials} $b_c$,
indexed by $c = (c_1, \dots, c_r) \in \N^r$ with
$|c| := c_1 + \cdots + c_r \le n$, where $\N = \{0, 1, 2, \dots\}$, were
introduced in \cite{Kubo2019}: they are
invariant tropical (max-plus) polynomials of degree $|c| \le n$, and there
are $\binom{n+r}{r}$ of them ($\binom{n+r}{r}-1$ nonconstant, as $b_0 = 0$). In \cite{K1} we proved that they separate
orbits for all $n$ and $r$, with an explicit reconstruction algorithm, and
that the associated coordinate map is a bi-Lipschitz embedding of the orbit
space.

This paper settles the second question raised in the Discussion of
\cite{Kubo2019}, restated in the concluding remarks of \cite{K1}:

\begin{theorem}[= Theorem~\ref{th:gen}]
For every $n$ and $r$, the basic $r$-symmetric tropical polynomials generate
the semifield of $S_n$-invariant tropical rational functions on
$\R^{n \times r}$.
\end{theorem}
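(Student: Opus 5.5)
The plan follows the mechanism sketched in the abstract. Every $S_n$-invariant tropical rational function is a difference $f = P - Q$ of tropical polynomials. Symmetrizing gives a convenient normal form: $f$ is continuous and piecewise linear with integer slopes, so if $f$ is invariant then $f = \bigl(\max_\sigma P\circ\sigma\bigr) - \bigl(\max_\sigma Q\circ\sigma\bigr) + (\text{correction})$. The cleaner route is to show that each coordinate-level ingredient can be expressed. Concretely, I will show that the semifield $\mathcal{B}$ generated by the $b_c$ contains every invariant tropical polynomial. Since the invariant semifield is generated by invariant polynomials (write $f=P-Q$ and replace $P,Q$ by the symmetric maxima $\max_\sigma P\circ\sigma$ and $\max_\sigma (P-f)\circ\sigma=\max_\sigma P\circ\sigma - f$), this suffices. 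An invariant tropical polynomial is a max of orbit-sums of monomials $\sum_i \langle a_i, x_i\rangle$, maximized over permutations. So the target is the function $F_A(X)=\max_{\sigma\in S_n}\sum_i\langle a_{\sigma(i)},x_i\rangle$ for a fixed matrix $A$.

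\textbf{Step 1 (sorted columns).} The single-column values $b_{c}$ with $c=k e_j$ are the elementary symmetric tropical polynomials of column $j$. Differences of consecutive ones give the sorted entries $x_{(1),j}\ge\dots\ge x_{(n),j}$, so every sorted column entry lies in $\mathcal{B}$. Any assignment $\tau=(\tau_1,\dots,\tau_r)$ of sorted positions re-assembles a candidate multiset $Y_\tau$, whose rows are $(x_{(\tau_1(i)),1},\dots,x_{(\tau_r(i)),r})$. The true multiset equals $Y_\tau$ for at least one $\tau$. For each $\tau$, the value $F_A(Y_\tau)$ is a max over $\sigma$ of linear forms in sorted entries, hence lies in $\mathcal{B}$.

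\textbf{Step 2 (penalties).} Define $\Phi_\tau := \sum_c \lvert b_c(X) - b_c(Y_\tau)\rvert$. Each $b_c(Y_\tau)$ is a tropical polynomial in sorted entries, hence in $\mathcal{B}$, and $\lvert u\rvert = \max(u,-u)$, so $\Phi_\tau\in\mathcal{B}$. By separation from \cite{K1}, $\Phi_\tau=0$ exactly when $Y_\tau$ lies in the orbit of $X$. By the bi-Lipschitz inequality, $\Phi_\tau \ge \kappa\, d(X,Y_\tau)$, where $d$ is the orbit (matching) distance. Since $F_A$ is invariant and Lipschitz with constant $L_A$ for $d$, we get $\lvert F_A(X)-F_A(Y_\tau)\rvert\le L_A d(X,Y_\tau)$.

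\textbf{Step 3 (formula).} Taking an integer $M\ge L_A/\kappa$, I claim
\[
F_A(X)=\min_{\tau}\bigl(F_A(Y_\tau)+M\,\Phi_\tau(X)\bigr).
\]
For every $\tau$, the right-hand term is at least $F_A(X)$, because $F_A(Y_\tau)\ge F_A(X)-L_A d\ge F_A(X)-M\Phi_\tau$. Equality holds for the correct $\tau$. A min is the tropical division $-\max(-\cdot)$, so the expression lies in $\mathcal{B}$, which proves the theorem.

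\textbf{Main obstacle.} The hard step is the quantitative one: the bi-Lipschitz lower bound must compare $X$ with an \emph{arbitrary} re-assembly $Y_\tau$ sharing its sorted columns, with a uniform constant. I would invoke the bi-Lipschitz embedding theorem of \cite{K1} verbatim, with the matching distance on orbits. I would also check that $F_A$ is Lipschitz for that same distance, which holds because $F_A$ is a max of linear forms, with $L_A$ bounded by the norms of the rows of $A$. A secondary point is integrality: $M$ must be an integer so that $M\Phi_\tau$ is a tropical power, which is harmless.
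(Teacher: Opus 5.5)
Your proof is correct and is essentially the paper's argument. The expression $\min_\tau\bigl(F_A(Y_\tau)+M\,\Phi_\tau\bigr)$, taken over re-assemblies of the sorted columns and with the integer multiple of the basic-value discrepancy chosen so that the bi-Lipschitz lower bound outweighs the Lipschitz loss of the target, is exactly the formula \eqref{eq:formula} of Theorem~\ref{th:gen}. The differences are cosmetic: the preliminary reduction to invariant tropical polynomials and traces $F_A$ is unnecessary, because the same formula works verbatim for any invariant $f=P-Q$ (which is Lipschitz with constant at most $L_P+L_Q$), and your $\ell^1$ penalty in place of the paper's $\ell^\infty$ one only avoids the $\sqrt K$ factor.
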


The question sits inside the framework of Derksen \cite{Derksen}, which
charted invariant theory over the tropical semifield for an arbitrary
permutation group $G \le S_N$ acting on $\R^N$ and revealed a contrast
with the classical picture. First, the invariant tropical
\emph{polynomial} semiring is finitely generated if and only if $G$ is
generated by transpositions of coordinates \cite[Theorem~1.1]{Derksen}.
Second, at the level of tropical \emph{rational} functions, finite
generation is restored: \cite[Theorem~1.2]{Derksen} shows that the
invariant tropical polynomials of degree at most $N p_1 p_2 \cdots p_{|G|}$
generate, where $p_i$ is the $i$th prime; the trace construction
underlying that theorem yields finitely many generators --- one trace per
monomial whose exponent vector has all $N$ entries below the product
$p_1 \cdots p_{|G|}$, hence $(p_1 \cdots p_{|G|})^{N}$ of them. For the
row permutation action ($N = nr$, $G = S_n$ of order $n!$) the first
criterion fails when $r \ge 2$ --- a row transposition moves $2r \ge 4$
coordinates --- so no finite set of invariant tropical polynomials
generates the polynomial semiring, while the second gives generators of
degree at most $nr\, p_1 \cdots p_{n!}$ (we refer to the product of the
first $n!$ primes as the primorial factor). Our theorem replaces this by
the family of $\binom{n+r}{r}-1$ nonconstant basic generators of degree at
most $n$. Consistently with the
first contrast, the generating expressions we construct use tropical
division (subtraction) in an essential way.

The proof is short and rests on three pillars: two elementary
consequences of the definitions --- the sorted entries of each column of
a matrix $M \in \R^{n \times r}$ are differences of basic values of $M$,
and the multiset of rows of $M$, written $[M]$, is re-assembled from the
sorted columns by one of finitely many \emph{couplings} (tuples of
permutations matching the columns up) --- and the bi-Lipschitz
inequality for the basic coordinate map, from \cite{K1}. Given an invariant tropical rational function $f$, we express
it as a finite minimum over the couplings. The term for a coupling
$\vec\sigma$ evaluates $f$ on the multiset re-assembled from the sorted
columns according to $\vec\sigma$, plus a penalty. For at least one
coupling the re-assembled multiset is $[M]$ itself; the penalty then
vanishes, and that term equals $f(M)$. For every other coupling the penalty --- a
\emph{recoupling discriminant}, the maximal deviation between the actual
basic values of $M$ and those of the re-assembled multiset --- is, by the
bi-Lipschitz inequality, at least a fixed constant times the distance from
$[M]$ to the re-assembled multiset; a sufficiently large integer multiple
of the penalty therefore outweighs the amount by which $f$ can decrease
over that distance --- at most its Lipschitz constant times the distance
--- and the bi-Lipschitz constant calibrates this integer. All
operations involved are $\max$, $\min$, and integer combinations, hence
tropical rational.
Section~\ref{sec:algorithm} turns the proof into an expression algorithm: given an
invariant tropical rational expression, it outputs an expression in the
basic values; the required integer multiplier is obtained either from an
explicit lower bound on the bi-Lipschitz constant or, in its absence, by
trying the multipliers $1, 2, 4, \dots$ and testing the resulting identity
(the test is decidable). The mechanism is not specific to row permutations:
Section~\ref{sec:general} isolates the two axioms it uses (a separating
bi-Lipschitz family, and finitely many tropical rational maps that jointly
recover a representative of each orbit from the family's values) and
derives, for every permutation group $G \le S_N$, generators of the
invariant semifield of degree at most $\max\{N, \binom N2\}$ --- a bound
quadratic in $N$ and independent of the group order;
Remark~\ref{rem:quadraticvsprimorial} compares it with the primorial bound. A genericity theorem of
Cahill, Iverson, Mixon, and Packer \cite{CIMP}, with templates taken in
$\N^N$, yields
$2N + 1$ invariant tropical polynomials that separate orbits; feeding
these into the axioms gives $3N$ generators of the invariant semifield,
and no separating or generating family has fewer than $N$ members
(Corollaries \ref{cor:count} and~\ref{cor:gencount}); here the degrees are not controlled. A
rigidity theorem (Theorem~\ref{th:alternating}) shows that the quadratic degree
bound is optimal: for the alternating group $A_N$, every $A_N$-invariant
tropical polynomial of degree
less than $\binom N2$ is $S_N$-invariant, so every separating --- in
particular every generating --- family for $A_N$ contains a member of
degree at least $\binom N2$ (Corollary~\ref{cor:anlower}).

As a quantitative counterpart one may ask how small a separating --- in
particular, a generating --- subfamily of the basic family can be. A
subfamily version of the main theorem (Corollary~\ref{cor:subfamily}) shows that for
subfamilies containing the single-column values, generating and separating
are equivalent. The determination of the
minimal separating subfamilies is a distinct combinatorial problem, which
we do not pursue here.

\section{Preliminaries}\label{sec:prelim}

\subsection{Basic values, quotient metric, bi-Lipschitz embedding}

Rows of $M \in \R^{n\times r}$ are denoted $m_i = (m_{i1}, \dots, m_{ir})$;
$[M]$ is the orbit (multiset) of $M$ under row permutations. Following
\cite{K1}, for $c \in \N^r$ with
$0 < |c| := c_1 + \cdots + c_r \le n$ the \emph{basic value} is
\begin{equation}\label{eq:bvalue}
b_c(M) \;=\; \max\Bigl\{ \sum_{\alpha=1}^{r} \sum_{i \in I_\alpha} m_{i\alpha}
\;:\; I_1, \dots, I_r \subseteq \{1, \dots, n\} \text{ pairwise disjoint},\
\#I_\alpha = c_\alpha \Bigr\},
\end{equation}
and $b_0(M) = 0$. Each $b_c$ is an invariant tropical polynomial of degree
$|c|$; it is the tropicalization of the classical elementary multisymmetric
function of multidegree $c$ --- the same selections
$I_1, \dots, I_r$, with the outer sum replaced by $\max$ and the products by
sums. The classical functions are vector invariants of $S_n$ in the sense
of Weyl \cite[Chapter~II]{Weyl}. Write $\Phi = (b_c)_{0 < |c| \le n} : \R^{n \times r} \to \R^K$,
$K = \binom{n+r}{r} - 1$, and let
$d([M], [\bar M]) = \min_{\pi \in S_n} \|M - \pi \cdot \bar M\|_2$ be the
quotient metric. By \cite[Corollary~3.8]{K1}, $\Phi$ is a bi-Lipschitz
embedding:
\begin{equation}\label{eq:bilip}
\|\Phi(M) - \Phi(\bar M)\|_2 \;\ge\; c_{n,r}\; d([M], [\bar M])
\qquad \text{for all } M, \bar M ,
\end{equation}
for some constant $c_{n,r} > 0$. (The reverse inequality is not used.) We also use
$d_\infty([M],[\bar M]) := \min_\pi \max_{i,\alpha} |m_{i\alpha} -
\bar m_{\pi(i)\alpha}| \le d([M],[\bar M])$.

\subsection{Tropical rational functions}

Call a function $\ell : \R^m \to \R$ of the form
$\ell(z) = \gamma + \sum_{i=1}^m a_i z_i$ with $\gamma \in \R$ and
$a \in \Z^m$ an \emph{affine function with integer slopes}. A
\emph{tropical polynomial} in $z_1, \dots, z_m$ is a finite maximum
$\max_j \ell_j$ of such functions with $a_j \in \Z_{\ge 0}^m$; its
\emph{degree} is the smallest value, over such representations, of the
largest $\|a_j\|_1$. A
\emph{tropical rational function} is a difference of two tropical
polynomials. Throughout, tropical rational functions are identified when
they agree as functions on $\R^m$. The set of these functions is a
semifield under the operations $\oplus = \max$, $\odot = +$,
$\oslash = -$; ``generating the semifield'' always refers to these
operations, so it is a statement about functions, not about
formal expressions. (The separating and bi-Lipschitz statements of
\cite{Derksen} are likewise statements about functions.)

We use throughout the function-class characterization: \emph{the tropical
rational functions in $z_1, \dots, z_m$ are exactly the continuous piecewise
linear functions $\R^m \to \R$ with finitely many pieces, each an affine
function with integer slopes}. One inclusion is elementary: a
tropical polynomial is continuous and piecewise linear with finitely many
pieces, each with slopes in $\Z_{\ge 0}^m$, and a difference $P - Q$ of two
tropical polynomials is again continuous and piecewise linear; its pieces
live on the common refinement of the pieces of $P$ and of $Q$ (again
finitely many), and on each piece its slope vector is a difference of two
vectors in $\Z_{\ge 0}^m$, hence lies in $\Z^m$. For the converse, let $f$
be continuous piecewise linear with affine pieces
$\ell_1, \dots, \ell_q$ having integer slopes. By Ovchinnikov's max--min
representation \cite{Ovchinnikov} --- stated for continuous piecewise
linear functions on a closed convex domain, here $\R^m$ itself ---
$f = \min_j \max_{i \in A_j} \ell_i$ for
some subsets $A_j \subseteq \{1, \dots, q\}$. Each $\ell_i$ is tropical
rational: write it as
$\bigl( \gamma + \sum_{a_k > 0} a_k z_k \bigr) - \sum_{a_k < 0} (-a_k) z_k$, a
difference of two tropical monomials, the negative slopes entering through
the subtraction. Maxima stay in the class by
$\max(P_1 - Q_1,\, P_2 - Q_2) = \max(P_1 + Q_2,\, P_2 + Q_1) - (Q_1 + Q_2)$,
and minima by $\min(A,B) = A + B - \max(A,B)$; hence
$f = \min_j \max_{i \in A_j} \ell_i$ is tropical rational.

The characterization shows that the class is closed under composition: if
$g$ and $h_1, \dots, h_m$ are tropical rational, then
$g(h_1, \dots, h_m)$ is continuous, and on each cell of the common
refinement of the (finitely many) pieces of $h_1, \dots, h_m$ on which $g$
is evaluated within a single piece of $g$, it is affine with slope vector
an integer combination of integer slope vectors; there are finitely many
such cells, so $g(h_1, \dots, h_m)$ is again in the class. Call a map
$T = (h_1, \dots, h_m) : \R^p \to \R^m$ \emph{componentwise tropical
rational} if each component $h_i$ is; the closure just proved says
precisely that $g \circ T$ is tropical rational for every tropical rational
$g$ on $\R^m$. For the
polynomial map $\Phi$, composing is just substituting the tropical
polynomial expressions $b_c$ for the variables of $g$, and the
sub-semifield of invariants generated by the basic values is exactly
\[
\{\, g \circ \Phi \;:\; g \text{ tropical rational on } \R^K \,\}.
\]

\section{The generation theorem}\label{sec:gen}

\subsection{Four lemmas}\label{sec:lemmas}

\begin{lemma}[sorted columns]\label{lem:marg}
For every $\alpha \in \{1, \dots, r\}$ and $i \in \{1, \dots, n\}$, the
$i$th largest entry of column $\alpha$ of $M$ equals
$b_{i e_\alpha}(M) - b_{(i-1)e_\alpha}(M)$ (with $b_0 := 0$ and
$e_\alpha \in \N^r$ the $\alpha$th standard basis vector). Consequently
the map $s : \R^K \to \R^{n \times r}$,
$s(w)_{i\alpha} := w_{i e_\alpha} - w_{(i-1) e_\alpha}$ --- componentwise
tropical rational, being a difference of coordinates --- carries $\Phi(M)$
to the matrix of sorted columns of $M$.
\end{lemma}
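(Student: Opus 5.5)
The plan is to compute $b_{i e_\alpha}(M)$ directly from the definition \eqref{eq:bvalue} and then take successive differences. For $c = i e_\alpha$ with $1 \le i \le n$, every $c_\beta$ with $\beta \ne \alpha$ is zero, so every $I_\beta$ with $\beta \neq \alpha$ must be empty. The disjointness condition is therefore vacuous, and the maximum runs over all $i$-element subsets $I_\alpha \subseteq \{1,\dots,n\}$ of the sum $\sum_{j \in I_\alpha} m_{j\alpha}$.

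The key step is the elementary claim that this maximum equals the sum of the $i$ largest entries of column $\alpha$ (counted with multiplicity). Let $x_{(1)} \ge x_{(2)} \ge \cdots \ge x_{(n)}$ be the sorted entries of column $\alpha$, realized by a permutation $\tau$ with $m_{\tau(k)\alpha} = x_{(k)}$.

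For the lower bound, choose $I_\alpha = \{\tau(1),\dots,\tau(i)\}$; its sum is $x_{(1)}+\cdots+x_{(i)}$.

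For the upper bound, list any $i$-subset $J$ in decreasing order of its entries, $y_1 \ge \cdots \ge y_i$. Then $y_k \le x_{(k)}$ for each $k$, since at least $k$ entries of the column are $\ge y_k$. Summing gives $\sum_{j\in J} m_{j\alpha} \le \sum_{k\le i} x_{(k)}$.

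Hence $b_{ie_\alpha}(M) = x_{(1)}+\cdots+x_{(i)}$. The same formula holds for $i=0$ by the convention $b_0 = 0$, as an empty sum.

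Telescoping then gives $b_{ie_\alpha}(M) - b_{(i-1)e_\alpha}(M) = x_{(i)}$, the $i$th largest entry, for $1\le i\le n$. All indices involved satisfy $0 < |ie_\alpha| = i \le n$, so each $b_{ie_\alpha}$ is a coordinate of $\Phi$ (or is $0$ when $i-1=0$).

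For the consequence, read $w_{0}$ as the constant $0$. Then $s(\Phi(M))_{i\alpha} = x^{(\alpha)}_{(i)}$, so $s(\Phi(M))$ is the matrix whose column $\alpha$ is column $\alpha$ of $M$ sorted decreasingly. Each component of $s$ is either a difference of two coordinates or a single coordinate. Each is therefore an affine function with integer slopes, hence tropical rational by the characterization in the preliminaries.

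No step is a real obstacle. The only point requiring care is the exchange inequality $y_k \le x_{(k)}$ in the upper bound, in the presence of ties, which the counting argument handles. A minor bookkeeping point is the convention $w_0 = 0$ in the definition of $s$ at $i=1$.
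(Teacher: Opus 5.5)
Your proposal is correct and is essentially the paper's own proof. The paper's proof is the single observation that, by \eqref{eq:bvalue}, $b_{ie_\alpha}$ is the sum of the $i$ largest entries of column $\alpha$; you supply the details it leaves implicit, namely the counting bound $y_k \le x_{(k)}$ that handles ties, the telescoping, and the convention $w_0 = 0$ needed for $s(w)_{1\alpha}$.
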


\begin{proof}
By \eqref{eq:bvalue}, $b_{i e_\alpha}$ is the sum of the $i$ largest entries
of column $\alpha$.
\end{proof}

A tuple $\vec\sigma = (\sigma_2, \dots, \sigma_r) \in S_n^{\,r-1}$ is
called a \emph{coupling}.

\begin{lemma}[re-assembly]\label{lem:sheet}
For a matrix $U \in \R^{n\times r}$ with columns $U^{(1)}, \dots, U^{(r)}$
and a coupling $\vec\sigma$, let
$R_{\vec\sigma}(U)$ be the matrix whose $i$th row is
\[
\bigl( U_{i}^{(1)},\, U_{\sigma_2(i)}^{(2)},\, \dots,\, U_{\sigma_r(i)}^{(r)} \bigr) .
\]
Then for every $M$ there exists $\vec\sigma \in S_n^{\,r-1}$ with
$[M] = [R_{\vec\sigma}(s(\Phi(M)))]$.
\end{lemma}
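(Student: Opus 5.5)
By Lemma~\ref{lem:marg}, $U := s(\Phi(M))$ is the matrix of sorted columns of $M$. The plan is to write down a coupling that undoes the sorting, and then check that the resulting rows are exactly the rows of $M$ after one common relabelling.

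First, for each column $\alpha$ I choose a permutation $\tau_\alpha \in S_n$ that sorts it. That is, $\tau_\alpha(i)$ is the index of a row whose $\alpha$-entry is the $i$th largest entry of column $\alpha$. Ties are broken arbitrarily, so that $\tau_\alpha$ is a genuine bijection. Then
\[
U^{(\alpha)}_i = m_{\tau_\alpha(i)\alpha} \quad \text{for all } i,
\qquad\text{equivalently}\qquad
m_{j\alpha} = U^{(\alpha)}_{\tau_\alpha^{-1}(j)} \quad \text{for all } j .
\]

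Next, I relabel the rows of $M$ through the first column. Row $j$ of $M$ is placed at position $i = \tau_1^{-1}(j)$, so $j = \tau_1(i)$. Its entries are then
\[
m_{\tau_1(i)\alpha} = U^{(\alpha)}_{\tau_\alpha^{-1}(\tau_1(i))} .
\]
I therefore define the coupling by $\sigma_\alpha := \tau_\alpha^{-1} \circ \tau_1 \in S_n$ for $\alpha = 2, \dots, r$. With this choice, the $i$th row of $R_{\vec\sigma}(U)$ is
\[
\bigl(U^{(1)}_i, U^{(2)}_{\sigma_2(i)}, \dots, U^{(r)}_{\sigma_r(i)}\bigr)
= \bigl(m_{\tau_1(i)1}, \dots, m_{\tau_1(i)r}\bigr) = m_{\tau_1(i)} .
\]
Here the first entry uses $U^{(1)}_i = m_{\tau_1(i)1}$ directly.

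Hence $R_{\vec\sigma}(U)$ is $M$ with its rows permuted by $\tau_1$, and so $[R_{\vec\sigma}(U)] = [M]$. For $r = 1$ the coupling is empty and $R(U) = U$ is a row permutation of $M$, so the claim is immediate.

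There is no real obstacle. The only point needing care is the handling of ties: the sorted column $U^{(\alpha)}$ does not depend on how ties are broken, but each $\tau_\alpha$ must still be fixed as a bijection so that $\sigma_\alpha$ is well defined. This also explains why the statement only asserts existence of a coupling, and does not claim uniqueness.
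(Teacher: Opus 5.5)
Your proof is correct and follows essentially the paper's route. The paper also orders the rows of $M$ by column $1$, which is your $\tau_1$, and then matches each remaining column to $U^{(\alpha)}$ by some bijection $\sigma_\alpha$ obtained from the equality of multisets; your formula $\sigma_\alpha = \tau_\alpha^{-1}\circ\tau_1$ simply writes that bijection down explicitly.
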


\begin{proof}
Set $U := s(\Phi(M))$, the matrix of sorted columns of $M$
(Lemma~\ref{lem:marg}), with columns $U^{(1)}, \dots, U^{(r)}$. Index the
rows of $M$ as $q_1, \dots, q_n$ in nonincreasing column-$1$ order, so that
$(q_i)_1 = U^{(1)}_i$ by the choice of indexing. For each
$\alpha \in \{2, \dots, r\}$, the multiset of the entries
$(q_1)_\alpha, \dots, (q_n)_\alpha$ equals the multiset of the entries of
$U^{(\alpha)}$; hence there is a bijection
$\sigma_\alpha : \{1, \dots, n\} \to \{1, \dots, n\}$ with
$(q_i)_\alpha = U^{(\alpha)}_{\sigma_\alpha(i)}$ for all $i$. Then the
$i$th row of $R_{\vec\sigma}(U)$ equals
$q_i$, whence $[R_{\vec\sigma}(U)] = [R_{\vec\sigma}(s(\Phi(M)))] = [M]$.
\end{proof}

\begin{lemma}[recoupling discriminants]\label{lem:disc}
For $\vec\sigma \in S_n^{\,r-1}$ define $D_{\vec\sigma} : \R^K \to \R$,
\[
D_{\vec\sigma}(w) \;:=\; \max_{0<|c|\le n} \bigl|\, w_c \;-\; b_c\bigl( R_{\vec\sigma}(s(w)) \bigr) \,\bigr| .
\]
Then $D_{\vec\sigma}$ is tropical rational,
\[
D_{\vec\sigma}(\Phi(M)) \;\ge\; c'_{n,r}\; d_\infty\bigl( [M],\, [R_{\vec\sigma}(s(\Phi(M)))] \bigr),
\qquad c'_{n,r} := \frac{c_{n,r}}{\sqrt K} \;>\; 0 ,
\]
and moreover $D_{\vec\sigma}(\Phi(M)) = 0$ whenever
$[R_{\vec\sigma}(s(\Phi(M)))] = [M]$.
\end{lemma}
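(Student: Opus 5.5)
The proof splits into the three assertions of the statement: tropical rationality, the lower bound, and vanishing on correct couplings. Each reduces to an earlier fact.

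\emph{Tropical rationality.} By Lemma~\ref{lem:marg}, $s$ is componentwise tropical rational. The re-assembly map $R_{\vec\sigma}$ only permutes entries within columns, so it is a coordinate permutation and hence linear with $0/1$ slopes. Therefore $w \mapsto R_{\vec\sigma}(s(w))$ is componentwise tropical rational. Each $b_c$ is a tropical polynomial, so by the closure under composition from the preliminaries, $w \mapsto b_c(R_{\vec\sigma}(s(w)))$ is tropical rational. The difference $w_c - b_c(\cdots)$ is then tropical rational. We write the absolute value as $|x| = \max(x,-x)$ and take a finite maximum over $c$; both operations stay in the class, so $D_{\vec\sigma}$ is tropical rational.

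\emph{Lower bound.} Put $\bar M := R_{\vec\sigma}(s(\Phi(M)))$. Then
\[
D_{\vec\sigma}(\Phi(M)) = \max_c |b_c(M) - b_c(\bar M)| = \|\Phi(M) - \Phi(\bar M)\|_\infty .
\]
Since $\Phi$ has $K$ coordinates,
\[
\|\Phi(M)-\Phi(\bar M)\|_2 \le \sqrt K\,\|\Phi(M)-\Phi(\bar M)\|_\infty .
\]
Combining this with the bi-Lipschitz inequality \eqref{eq:bilip} gives
\[
D_{\vec\sigma}(\Phi(M)) \ge \frac{c_{n,r}}{\sqrt K}\, d([M],[\bar M]).
\]
Finally, $d \ge d_\infty$: for the minimizing $\pi$, the Euclidean norm dominates the max norm. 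This yields the stated bound with $c'_{n,r} = c_{n,r}/\sqrt K$.

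\emph{Vanishing.} If $[\bar M] = [M]$, then invariance of each $b_c$ gives $b_c(\bar M) = b_c(M) = \Phi(M)_c$ for all $c$, so every term of the maximum is $0$.

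There is no real obstacle here. The only step that needs care is the first one: one must check that inserting the tropical rational map $s$ into the polynomials $b_c$ is covered by the composition-closure property stated in the preliminaries.
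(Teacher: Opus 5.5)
Your proposal is correct and follows the paper's proof essentially step for step. Both arguments run the same way: tropical rationality comes from the composition closure applied to the single-entry map $w \mapsto R_{\vec\sigma}(s(w))$ together with $|a-b| = \max(a-b,\,b-a)$; the lower bound comes from the identity $D_{\vec\sigma}(\Phi(M)) = \|\Phi(M)-\Phi(\bar M)\|_\infty$ followed by the $\ell^\infty$/$\ell^2$ comparison in $\R^K$, the bi-Lipschitz inequality, and $d \ge d_\infty$; and vanishing follows from invariance of $\Phi$.
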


\begin{proof}
Tropical rationality: the entries of $R_{\vec\sigma}(s(w))$ are single
components of $s(w)$, each tropical rational in $w$
(Lemma~\ref{lem:marg}); each $b_c(R_{\vec\sigma}(s(w)))$ is a tropical
polynomial in these entries, hence tropical rational in $w$ by the
composition closure of Section~\ref{sec:prelim}; and
$|a - b| = \max(a-b,\, b-a)$. For the remaining claims, set
$w = \Phi(M)$ and $\widehat M := R_{\vec\sigma}(s(w))$; then
$D_{\vec\sigma}(\Phi(M)) = \|\Phi(M) - \Phi(\widehat M)\|_\infty$. The vanishing claim
follows since $[\widehat M] = [M]$ implies $\Phi(\widehat M) = \Phi(M)$. For
the inequality we combine three estimates:
\[
\|\Phi(M) - \Phi(\widehat M)\|_\infty \;\ge\; \frac{\|\Phi(M)-\Phi(\widehat M)\|_2}{\sqrt K}
\;\ge\; \frac{c_{n,r}}{\sqrt K}\, d([M],[\widehat M])
\;\ge\; c'_{n,r}\, d_\infty([M],[\widehat M]) .
\]
The first is the norm comparison in $\R^K$, and the second is the
bi-Lipschitz inequality \eqref{eq:bilip}. The third is the comparison
$d \ge d_\infty$ stated in Section~\ref{sec:prelim}.
\end{proof}

\begin{lemma}[Lipschitz]\label{lem:lip}
Let a permutation group $G \le S_N$ act on $\R^N$ by permuting
coordinates. Every
tropical rational $f$ on $\R^N$ is $\ell^\infty$-Lipschitz with some
constant $L_f$; if $f$ is $G$-invariant then
$|f(x) - f(y)| \le L_f\, d_\infty([x],[y])$, where
$d_\infty([x],[y]) := \min_{g \in G} \|x - g \cdot y\|_\infty$ as in
Section~\ref{sec:prelim}.
\end{lemma}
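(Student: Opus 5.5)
The plan has two parts: first prove the $\ell^\infty$-Lipschitz bound for an arbitrary tropical rational $f$ on $\R^N$, then use $G$-invariance to pass to the quotient metric.

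\emph{Step 1: the Lipschitz bound.} By the function-class characterization in Section~\ref{sec:prelim}, $f$ is continuous and piecewise linear with finitely many affine pieces $\ell_1,\dots,\ell_q$. Each piece has the form $\ell_j(z)=\gamma_j+\langle a_j,z\rangle$ with $a_j\in\Z^N$. I set
\[
L_f:=\max_j \|a_j\|_1 .
\]
Now fix $x,y\in\R^N$ and consider the segment $t\mapsto x+t(y-x)$ for $t\in[0,1]$. The pieces are finitely many closed polyhedral sets covering $\R^N$, and each meets the segment in a closed interval (possibly empty). Hence $[0,1]$ splits into finitely many subintervals $0=t_0<t_1<\dots<t_k=1$, on each of which $f$ agrees with a single $\ell_j$. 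On the subinterval $[t_{i-1},t_i]$, Hölder's inequality gives
\[
|f(x+t_i(y-x))-f(x+t_{i-1}(y-x))|\le \|a_j\|_1\,(t_i-t_{i-1})\,\|y-x\|_\infty .
\]
Summing over $i$ (by continuity the endpoint values match up) yields $|f(x)-f(y)|\le L_f\|x-y\|_\infty$.

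A cleaner route avoids the decomposition of the segment altogether. Use the Ovchinnikov max--min representation $f=\min_j\max_{i\in A_j}\ell_i$. Each $\ell_i$ is $\|a_i\|_1$-Lipschitz in $\ell^\infty$, and taking maxima and minima of $L$-Lipschitz functions preserves the constant $L$. So $f$ is $L_f$-Lipschitz directly. I would present this version, since the only thing to check is the elementary fact $|\max_i u_i-\max_i v_i|\le\max_i|u_i-v_i|$, together with its analogue for minima.

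\emph{Step 2: invariance.} Suppose $f$ is $G$-invariant. Then for every $g\in G$,
\[
|f(x)-f(y)|=|f(x)-f(g\cdot y)|\le L_f\|x-g\cdot y\|_\infty .
\]
Taking the minimum over the finite group $G$ gives $|f(x)-f(y)|\le L_f\,d_\infty([x],[y])$.

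The only point that needs any care is the Lipschitz constant in Step 1, and the max--min representation makes it immediate. In the row-action setting one also has to match conventions: there $G=S_n$ acting on $N=nr$ coordinates, and $d_\infty$ is exactly the quantity defined in Section~\ref{sec:prelim}.
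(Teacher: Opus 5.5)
Your proof is correct, and Step~2 is the paper's argument. Your first (segment) version of Step~1 is also essentially the paper's proof, with one caveat on the cells. The paper writes $f = P - Q$ and takes as cells the nonempty intersections of a linearity region of $P$ with one of $Q$; these are convex. By contrast, the set where $f$ coincides with one of its affine pieces need not be convex: $\min(|z_1|,1)$ equals $1$ on two disjoint half-spaces. So your claim ``each meets the segment in a closed interval'' needs the paper's choice of cells.

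The version you would actually present is a genuinely different route. It uses Ovchinnikov's max--min representation together with the fact that finite maxima and minima of $L$-Lipschitz functions are $L$-Lipschitz. It gives the same constant $\max_i \|a_i\|_1$ over the affine pieces of $f$ and replaces the geometric subdivision of the segment by a one-line inequality. The cost is that it rests on Ovchinnikov's theorem, a nontrivial external result, whereas the paper's argument is self-contained from the definition $f = P - Q$.

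Since the lemma only asks for \emph{some} constant, the cheapest route needs neither. Apply your max-inequality to $P$ and to $Q$ separately to get the constant $L_P + L_Q$, the sum of the largest slope $\ell^1$-norms of the terms of $P$ and of $Q$. This is exactly the cruder bound used in step~1 of the expression algorithm (Section~\ref{sec:algorithm}).
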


\begin{proof}
Write $f = P - Q$. Each linearity region of the tropical polynomial $P$
--- the set where one of its affine terms is maximal --- is an
intersection of half-spaces, hence a closed convex polyhedron, and
likewise for $Q$; the nonempty intersections of a region of $P$ with a
region of $Q$ (their common refinement) are therefore finitely many
closed convex cells covering $\R^N$, and $f$ is affine on
each, with slope $a_p \in \Z^N$ on the cell $p$. Put
$L_f := \max_p \|a_p\|_1$. Within one cell,
$|f(x) - f(y)| = |\langle a_p,\, x - y \rangle|
\le L_f\, \|x - y\|_\infty$. For arbitrary $x, y$, each cell, being
convex, meets the segment from $x$ to $y$ in a closed subsegment
(possibly empty), so the segment subdivides at its finitely many cell
crossings into $x = x_0, x_1, \dots, x_\mu = y$ with consecutive points
in a common cell; by continuity and the one-cell bound,
\[
|f(x) - f(y)| \;\le\; \sum_{k} |f(x_k) - f(x_{k+1})|
\;\le\; L_f \sum_{k} \|x_k - x_{k+1}\|_\infty
\;=\; L_f\, \|x - y\|_\infty ,
\]
the last equality because the $x_k$ lie on one segment in order. For the
invariant statement, choose $g \in G$ attaining $d_\infty([x],[y])$; then
$f(y) = f(g \cdot y)$ and
$|f(x) - f(g \cdot y)| \le L_f\, \|x - g \cdot y\|_\infty
= L_f\, d_\infty([x],[y])$.
\end{proof}

\subsection{The theorem}

\begin{theorem}\label{th:gen}
For every $n$ and $r$, the basic $r$-symmetric tropical polynomials generate
the semifield of $S_n$-invariant tropical rational functions on
$\R^{n\times r}$: for every invariant tropical rational $f$ there is a
tropical rational $g$ on $\R^K$ with $f = g \circ \Phi$. One may take
\begin{equation}\label{eq:formula}
g(w) \;=\; \min_{\vec\sigma \in S_n^{\,r-1}} \Bigl[\, f\bigl( R_{\vec\sigma}(s(w)) \bigr) \;+\; \kappa_f\, D_{\vec\sigma}(w) \,\Bigr],
\qquad w \in \R^K,
\end{equation}
for every sufficiently large integer $\kappa_f$.
\end{theorem}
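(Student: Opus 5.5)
The plan is to verify two things separately: that $g$ defined by \eqref{eq:formula} is tropical rational on $\R^K$, and that $g(\Phi(M)) = f(M)$ for every $M$ once $\kappa_f \ge L_f / c'_{n,r}$.

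\textbf{Rationality of $g$.} For each coupling $\vec\sigma$, the map $w \mapsto R_{\vec\sigma}(s(w))$ is componentwise tropical rational, since its entries are single components of $s(w)$ (Lemma~\ref{lem:marg}). By the composition closure of Section~\ref{sec:prelim}, $f \circ R_{\vec\sigma} \circ s$ is therefore tropical rational. Lemma~\ref{lem:disc} shows that $D_{\vec\sigma}$ is tropical rational. An integer multiple $\kappa_f D_{\vec\sigma}$ is an iterated tropical product, so it stays in the class. Finally, a finite minimum stays in the class via $\min(A,B) = A+B-\max(A,B)$. Hence $g$ is tropical rational.

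\textbf{Upper bound $g(\Phi(M)) \le f(M)$.} Fix $M$ and put $w = \Phi(M)$. By Lemma~\ref{lem:sheet} there is a coupling $\vec\sigma_0$ with $[R_{\vec\sigma_0}(s(w))] = [M]$. For this coupling, invariance of $f$ gives $f(R_{\vec\sigma_0}(s(w))) = f(M)$. The vanishing clause of Lemma~\ref{lem:disc} gives $D_{\vec\sigma_0}(w) = 0$. So the $\vec\sigma_0$ term equals $f(M)$, and the minimum is at most $f(M)$.

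\textbf{Lower bound: every term is at least $f(M)$.} Take an arbitrary $\vec\sigma$ and set $\widehat M = R_{\vec\sigma}(s(w))$. Apply Lemma~\ref{lem:lip} with $G = S_n$ acting on $\R^{nr}$ by row permutations; here $d_\infty$ is exactly the quotient $\ell^\infty$ distance of Section~\ref{sec:prelim}. This gives
\[
f(\widehat M) \ge f(M) - L_f\, d_\infty([M],[\widehat M]).
\]
Lemma~\ref{lem:disc} gives
\[
\kappa_f D_{\vec\sigma}(w) \ge \kappa_f\, c'_{n,r}\, d_\infty([M],[\widehat M]).
\]
Adding the two, the $\vec\sigma$ term is at least $f(M) + (\kappa_f c'_{n,r} - L_f)\, d_\infty([M],[\widehat M])$, which is at least $f(M)$ when $\kappa_f \ge L_f/c'_{n,r}$.

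Combining the two bounds yields $g \circ \Phi = f$ for every integer $\kappa_f \ge L_f/c'_{n,r}$, which is the asserted "every sufficiently large integer". The substantive content is the bi-Lipschitz inequality \eqref{eq:bilip}, already packaged into Lemma~\ref{lem:disc}. The only delicate point is that the Lipschitz constant and the discriminant bound must be measured in the same metric $d_\infty$. They are, so I expect no real obstacle.
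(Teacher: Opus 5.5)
Your proposal is correct and follows the paper's proof essentially step for step. You establish tropical rationality of each term via composition closure and Lemma~\ref{lem:disc}, get the upper bound from the re-assembly coupling of Lemma~\ref{lem:sheet} (whose discriminant vanishes), and get the lower bound by combining the invariant Lipschitz bound of Lemma~\ref{lem:lip} with the discriminant bound in the common metric $d_\infty$, valid for every integer $\kappa_f \ge L_f/c'_{n,r}$.
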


\begin{proof}
Each term of \eqref{eq:formula} is tropical rational: $f$ composed with the
componentwise tropical rational map $w \mapsto R_{\vec\sigma}(s(w))$, plus an
integer multiple of $D_{\vec\sigma}$ (Lemma~\ref{lem:disc}); the finite
minimum preserves the class. Fix $M$ and write $w = \Phi(M)$,
$\widehat M_{\vec\sigma} := R_{\vec\sigma}(s(w))$.

\emph{$g(w) \le f(M)$}: by Lemma~\ref{lem:sheet} some $\vec\tau$ has
$[\widehat M_{\vec\tau}] = [M]$; its term is
$f(\widehat M_{\vec\tau}) + \kappa_f \cdot 0 = f(M)$, the vanishing clause of
Lemma~\ref{lem:disc} giving $D_{\vec\tau}(w) = 0$.

\emph{$g(w) \ge f(M)$ once $\kappa_f \ge L_f / c'_{n,r}$}: for every
$\vec\sigma$,
\[
f(\widehat M_{\vec\sigma}) + \kappa_f D_{\vec\sigma}(w)
\;\ge\; f(M) - L_f\, d_\infty([M],[\widehat M_{\vec\sigma}]) + \kappa_f\, c'_{n,r}\, d_\infty([M],[\widehat M_{\vec\sigma}])
\;\ge\; f(M),
\]
the first inequality by the invariant Lipschitz bound of
Lemma~\ref{lem:lip} and the discriminant lower bound of
Lemma~\ref{lem:disc}, the second because $\kappa_f\, c'_{n,r} \ge L_f$.
\end{proof}

\begin{corollary}\label{cor:degree}
The semifield of $S_n$-invariant tropical rational functions on
$\R^{n \times r}$ is generated by $\binom{n+r}{r} - 1$ elements of degree at
most $n$.
\end{corollary}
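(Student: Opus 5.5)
This follows directly from Theorem~\ref{th:gen}; what remains is to count the generators and bound their degrees. By Theorem~\ref{th:gen}, every $S_n$-invariant tropical rational $f$ on $\R^{n\times r}$ has the form $g\circ\Phi$ with $g$ tropical rational on $\R^K$. As recorded at the end of Section~\ref{sec:prelim}, the functions $g\circ\Phi$ are exactly the elements of the sub-semifield generated by the components $b_c$ of $\Phi$. That sub-semifield is therefore the whole invariant semifield, and the family $\{b_c : 0<|c|\le n\}$ generates it.

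\textbf{Counting.} The index set is $\{c\in\N^r : 0<|c|\le n\}$. The number of $c\in\N^r$ with $|c|\le n$ is the number of monomials of degree at most $n$ in $r$ variables, namely $\binom{n+r}{r}$, by the standard stars-and-bars count (add a slack variable to reach $r+1$ variables of total degree exactly $n$). Removing $c=0$ leaves $K=\binom{n+r}{r}-1$ generators. The constant $b_0=0$ is never needed, since constants are already available in the semifield: affine pieces carry arbitrary real constants $\gamma$.

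\textbf{Degrees.} We show $\deg b_c\le |c|\le n$. By \eqref{eq:bvalue}, $b_c$ is a maximum of sums $\sum_\alpha\sum_{i\in I_\alpha} m_{i\alpha}$. Because the sets $I_\alpha$ are pairwise disjoint, each such term is a tropical monomial whose exponent vector is a $0/1$ vector with exactly $|c|$ ones, so its $\ell^1$-norm is $|c|$. This representation of $b_c$ has largest exponent norm $|c|$. Since the degree is defined as the minimum over representations, $\deg b_c\le |c|\le n$.

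We only need the upper bound, so there is no real obstacle. The one thing to handle with care is that the degree is defined as a minimum over representations, and exhibiting a single representation gives exactly the upper bound required.
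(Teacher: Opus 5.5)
Your proposal is correct and matches the paper, which treats the corollary as an immediate consequence of Theorem~\ref{th:gen} together with the count $K=\binom{n+r}{r}-1$ and the fact that each $b_c$ is an invariant tropical polynomial of degree $|c|\le n$. One small quibble: each monomial of $b_c$ has a $0/1$ exponent vector because each $I_\alpha$ is a set and different $\alpha$ index different columns, not because the $I_\alpha$ are pairwise disjoint; this does not affect your conclusion.
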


\begin{corollary}[image description]\label{cor:image}
The image of $\Phi$ is the zero set of the single nonnegative tropical
rational function $\min_{\vec\sigma} D_{\vec\sigma}$:
\[
\Phi(\R^{n \times r}) \;=\; \bigl\{\, w \in \R^K \;:\;
\min_{\vec\sigma \in S_n^{\,r-1}} D_{\vec\sigma}(w) \,=\, 0 \,\bigr\} ,
\]
a finite union of polyhedra.
\end{corollary}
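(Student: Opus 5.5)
The plan is to prove the two inclusions separately and then identify the zero set as a finite union of polyhedra. Write $D := \min_{\vec\sigma \in S_n^{\,r-1}} D_{\vec\sigma}$. By Lemma~\ref{lem:disc} each $D_{\vec\sigma}$ is tropical rational and, being a maximum of absolute values, is nonnegative. A finite minimum stays in the class, so $D$ is a nonnegative tropical rational function on $\R^K$.

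\emph{Image $\subseteq$ zero set.} Let $w = \Phi(M)$. Lemma~\ref{lem:sheet} supplies a coupling $\vec\tau$ with $[R_{\vec\tau}(s(w))] = [M]$. The vanishing clause of Lemma~\ref{lem:disc} then gives $D_{\vec\tau}(w) = 0$. Hence $D(w) \le 0$, and nonnegativity forces $D(w) = 0$.

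\emph{Zero set $\subseteq$ image.} Suppose $D(w) = 0$ for some $w \in \R^K$, not assumed to lie in the image. Choose $\vec\sigma$ with $D_{\vec\sigma}(w) = 0$ and put $\widehat M := R_{\vec\sigma}(s(w)) \in \R^{n\times r}$. By the definition of $D_{\vec\sigma}$, vanishing means $w_c = b_c(\widehat M)$ for every $0 < |c| \le n$, that is, $w = \Phi(\widehat M)$. So $w$ lies in the image.

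This reverse inclusion is the only point needing care. One must not invoke the bi-Lipschitz bound of Lemma~\ref{lem:disc}, since that bound is stated only for $w = \Phi(M)$. The argument above avoids it: it uses only the definition of $D_{\vec\sigma}$, which makes sense for arbitrary $w$, and exhibits an explicit preimage. Note also that $s(w)$ need not have sorted columns when $w$ is arbitrary. This does not matter, because $\widehat M$ is simply some matrix and $\Phi(\widehat M) = w$ holds regardless.

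\emph{Polyhedral structure.} The zero set equals $\bigcup_{\vec\sigma} \{D_{\vec\sigma} = 0\}$, and each piece equals $\bigcap_c \{\, w_c = b_c(R_{\vec\sigma}(s(w))) \,\}$. By the function-class characterization of Section~\ref{sec:prelim}, each function $w \mapsto w_c - b_c(R_{\vec\sigma}(s(w)))$ is continuous piecewise linear with finitely many pieces, and each piece lives on a closed convex polyhedron. The zero set of such a function is therefore a finite union of polyhedra: on each cell it is the cell intersected with a hyperplane, or the whole cell. Intersecting finitely many such sets over $c$ and taking the union over the finitely many $\vec\sigma$ gives a finite union of polyhedra, because intersections of polyhedra are polyhedra.
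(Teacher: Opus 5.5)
Your proof is correct and follows the paper's argument. The forward inclusion uses Lemma~\ref{lem:sheet} together with the vanishing clause of Lemma~\ref{lem:disc}, and the reverse inclusion exhibits the preimage $R_{\vec\sigma}(s(w))$ directly from the definition of $D_{\vec\sigma}$, exactly as the paper does. For the polyhedral claim, you write the zero set as a union over $\vec\sigma$ of intersections over $c$, while the paper takes the cells of the single function $\min_{\vec\sigma} D_{\vec\sigma}$ directly. Both work, but the closed convex polyhedral cells come from the argument in the proof of Lemma~\ref{lem:lip}, or simply from $b_c(R_{\vec\sigma}(s(w)))$ being a maximum of linear forms in $w$. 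They do not come from the function-class characterization as stated.
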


\begin{proof}
If $w = \Phi(M)$ then $D_{\vec\tau}(w) = 0$ for any $\vec\tau$ provided by
Lemma~\ref{lem:sheet}, by Lemma~\ref{lem:disc}. Conversely, if the
minimum vanishes at $w$, then $D_{\vec\sigma}(w) = 0$ for some
$\vec\sigma$, and $w = \Phi\bigl( R_{\vec\sigma}(s(w)) \bigr)$. For the
last claim: as in the proof of Lemma~\ref{lem:lip}, $\R^K$ is covered
by finitely many closed convex cells $p$ on each of which
$\min_{\vec\sigma} D_{\vec\sigma}$ agrees with an affine function
$\ell_p$, so its zero set is $\bigcup_p \bigl( p \cap \{\ell_p = 0\}
\bigr)$, a finite union of polyhedra.
\end{proof}

\begin{remark}[relations]\label{rem:sft}
Classically, a presentation of an invariant ring has two halves: the
generators, and the \emph{relations} among them --- the identities that
the generators satisfy, studied for multisymmetric functions since Junker
\cite{Rydh,Vaccarino}. Knowing the relations amounts to describing the
image of the coordinate map. Over the
tropical semifield, Corollary~\ref{cor:image} plays that role in geometric form: the
coordinates $w_c$ of a point of the image satisfy no further constraints
beyond the vanishing of one explicit tropical rational function built from
the basic values themselves.
\end{remark}

\begin{corollary}[subfamilies]\label{cor:subfamily}
Let $F$ be a subfamily of the basic family.
\begin{itemize}
\item[\textup{(i)}] If $F$ generates the invariant semifield, then
$\Phi_F := (b_c)_{c \in F}$ separates orbits.
\item[\textup{(ii)}] If $F$ contains all single-column values
$b_{i e_\alpha}$ ($i \in \{1, \dots, n\}$, $\alpha \in \{1, \dots, r\}$)
and $\Phi_F$ separates orbits, then $F$ generates.
\end{itemize}
Hence, for subfamilies containing the single-column values,
generating and separating are equivalent.
\end{corollary}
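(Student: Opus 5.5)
For (i), I will use that generation yields functions of the form $g \circ \Phi_F$ which separate whatever the generators fail to. If $F$ generates, then every invariant tropical rational $f$ equals $g \circ \Phi_F$ for some tropical rational $g$. This is the subfamily analogue of the description $\{g\circ\Phi\}$ in Section~\ref{sec:prelim}, applied to the coordinates indexed by $F$. Suppose $\Phi_F(M) = \Phi_F(\bar M)$ but $[M] \ne [\bar M]$. Since the full family $\Phi$ separates orbits, by \cite{K1} or directly from \eqref{eq:bilip}, some $b_c$ has $b_c(M) \ne b_c(\bar M)$. But $b_c$ is an invariant tropical rational function, so $b_c = g \circ \Phi_F$ takes equal values at $M$ and $\bar M$, a contradiction. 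No Lipschitz information is needed here.

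For (ii), I will rerun the proof of Theorem~\ref{th:gen} with $\Phi_F$ in place of $\Phi$. Write $K_F = \#F$ and $w \in \R^{K_F}$.
\begin{itemize}
\item Since $F$ contains every $b_{ie_\alpha}$, the map $s$ of Lemma~\ref{lem:marg} is defined on $\R^{K_F}$ and carries $\Phi_F(M)$ to the sorted-column matrix. Lemma~\ref{lem:sheet} then holds verbatim.
\item I define $D^F_{\vec\sigma}(w) := \max_{c \in F} |w_c - b_c(R_{\vec\sigma}(s(w)))|$. This is tropical rational and vanishes when $[R_{\vec\sigma}(s(w))] = [M]$.
\item The formula \eqref{eq:formula} with $D^F$ then works, provided Lemma~\ref{lem:disc} holds with some constant $c_F > 0$. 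That is, I need
\[
\|\Phi_F(M) - \Phi_F(\widehat M)\|_\infty \ge c_F\, d_\infty([M],[\widehat M])
\]
whenever $\widehat M = R_{\vec\sigma}(s(\Phi_F(M)))$.
\end{itemize}

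This lower bound is the main obstacle, because separation alone does not give a bi-Lipschitz inequality. I will instead prove it only for the relevant pairs, exploiting that the pairs have a special, piecewise linear structure. Fix $\vec\sigma$ and consider the two maps
\[
M \mapsto \|\Phi_F(M) - \Phi_F(\widehat M_{\vec\sigma})\|_\infty,
\qquad
M \mapsto d_\infty([M],[\widehat M_{\vec\sigma}]).
\]
Both are continuous, piecewise linear with finitely many polyhedral pieces, and positively homogeneous under $M \mapsto tM + t'\mathbf{1}$. For the second map this holds because $d_\infty$ is a min over permutations of max norms of linear expressions in $M$, using that $\widehat M$ is a piecewise linear function of $M$. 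By separation, the first map vanishes exactly where the second does.

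On each closed polyhedral cone of a common refinement, both maps are linear and nonnegative, and the zero set of the first is contained in that of the second. I will use a standard lemma for two nonnegative linear functions $p, q$ on a polyhedral cone $C$ with $\{p = 0\} \cap C \subseteq \{q = 0\}$: they satisfy $p \ge c\,q$ on $C$ for some $c > 0$. To prove it, take the finitely many extreme rays, or generators, of $C$ (after quotienting the lineality space if needed), and compare $p$ and $q$ on each generator. Where $q > 0$ we also have $p > 0$, and $p = 0$ forces $q = 0$, so the generators give a positive ratio bound; linearity then extends the bound to all of $C$.

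Taking the minimum of these constants over the finitely many cells and the finitely many $\vec\sigma$ yields $c_F$. Then $\kappa \ge L_f / c_F$ closes the argument exactly as in Theorem~\ref{th:gen}. The equivalence statement follows by combining (i) and (ii).
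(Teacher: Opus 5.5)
Your proof is correct. Part (i) is the paper's argument, but part (ii) takes a different route at its one nontrivial step.

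The paper gets the lower bound needed in Lemma~\ref{lem:disc} by noting that each $b_c$ is a max filter. It then imports the global bi-Lipschitz theorem for injective max filter banks \cite{BT} and reruns the proof of Theorem~\ref{th:gen} verbatim.

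You instead prove the bound from scratch, and only along the pairs $(M,\widehat M_{\vec\sigma}(M))$ that the proof actually uses:
\begin{itemize}
\item all functions involved are built by $\max$, $\min$ and integer combinations from linear forms without constant terms, so $p$ and $q$ are linear on the cones of a finite polyhedral fan;
\item separation gives $\{p=0\}\subseteq\{q=0\}$;
\item the ratio bound on the finitely many generators of each cone propagates to the whole cone by linearity.
\end{itemize}
Positive homogeneity is essential here, and you rightly insisted on cones: with affine pieces on non-conical polyhedra the ratio can degenerate at infinity.

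Your route is self-contained and uses nothing specific to max filters. It therefore applies to any separating family of invariant tropical polynomials without constant terms. The same cone argument, applied to $(M,M')\mapsto\bigl(\|\Phi_F(M)-\Phi_F(M')\|_\infty,\ d_\infty([M],[M'])\bigr)$ on $\R^{n\times r}\times\R^{n\times r}$, even recovers the global lower Lipschitz bound that the paper imports.

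The paper's route buys brevity and a direct fit with hypothesis (S) of Theorem~\ref{th:axiomatic}, at the price of relying on an external theorem.
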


\begin{proof}
(i) If $F$ generates, each basic value $b_c$ is a tropical rational
expression in $F$, so $\Phi_F$ determines $\Phi$, which separates.
(ii) Each member of $F$ is a max filter in the sense of
Section~\ref{sec:general}, so the injective max filter bank
$\Phi_F$ is bi-Lipschitz \cite{BT}: there is $c_F > 0$ with
$\|\Phi_F(M) - \Phi_F(M')\|_2 \ge c_F\, d([M],[M'])$. The proof of
Theorem~\ref{th:gen} used only the single-column values (Lemma~\ref{lem:marg}),
the re-assembly lemma (Lemma~\ref{lem:sheet}), the discriminants with
their lower bound (Lemma~\ref{lem:disc}), and the Lipschitz bound
(Lemma~\ref{lem:lip}). Replacing $\Phi$ by $\Phi_F$ throughout --- the
discriminants restricted to $c \in F$, and
$c'_F := c_F/\sqrt{\#F}$ --- every step goes through verbatim, and
\eqref{eq:formula} expresses any invariant $f$ as a tropical rational
function of $F$.
\end{proof}

\begin{remark}[effectivity of $\kappa_f$]\label{rem:effective}
The constant $c_{n,r}$ of \eqref{eq:bilip} enters only through $\kappa_f$, and
any explicit lower bound on it makes the generating expressions effective
(its existence in \cite{K1} is qualitative). Alternatively,
Section~\ref{sec:algorithm} obtains $\kappa_f$ without any knowledge of $c_{n,r}$.
\end{remark}

\begin{remark}[expression size; the case $(n,r) = (2,2)$]\label{rem:size}
The formula \eqref{eq:formula} has $(n!)^{r-1}$ terms; no attempt at
optimality is made. For $(n,r) = (2,2)$ it reads as follows. The sorted
columns $x_1 \ge x_2$, $y_1 \ge y_2$ are the prefix differences
$x_1 = w_{10}$, $x_2 = w_{20} - w_{10}$, $y_1 = w_{01}$,
$y_2 = w_{02} - w_{01}$; the two couplings, \emph{aligned}
($\sigma = \mathrm{id}$) and \emph{crossed} ($\sigma = (1\,2)$),
re-assemble them into the matrices $\widehat M_{\mathrm{id}}$ and
$\widehat M_{(1\,2)}$ with rows $(x_1,y_1),\,(x_2,y_2)$ and
$(x_1,y_2),\,(x_2,y_1)$; and
\[
g = \min\bigl( f(\widehat M_{\mathrm{id}}) + \kappa_f D_{\mathrm{id}},\;\;
               f(\widehat M_{(1\,2)}) + \kappa_f D_{(1\,2)} \bigr),
\]
where $D_{\mathrm{id}} = |w_{11} - \max(x_1{+}y_2,\, x_2{+}y_1)|$ and
$D_{(1\,2)} = |w_{11} - (x_1{+}y_1)|$ (the other coordinates agree
automatically). As a worked instance, take
$f(M) = \max_i\, (m_{i1} + m_{i2})$, an invariant outside the basic
family: $f(\widehat M_{\mathrm{id}}) = x_1 + y_1$ and
$f(\widehat M_{(1\,2)}) = \max(x_1{+}y_2,\, x_2{+}y_1)$, so
\[
f \;=\; \min\bigl( x_1 + y_1 + \kappa_f D_{\mathrm{id}},\;\;
\max(x_1{+}y_2,\, x_2{+}y_1) + \kappa_f D_{(1\,2)} \bigr) \circ \Phi .
\]
Every $[M]$ equals $[\widehat M_{\mathrm{id}}]$ or $[\widehat M_{(1\,2)}]$
(or both, in case of ties). If $[M] = [\widehat M_{\mathrm{id}}]$ then
$w_{11} = \max(x_1{+}y_2,\, x_2{+}y_1)$, so the first term has
$D_{\mathrm{id}} = 0$ and value $f(M)$, while the penalty pushes the
second term to at least $f(M)$; if $[M] = [\widehat M_{(1\,2)}]$ then
$w_{11} = x_1 + y_1$ and the roles of the two terms are exchanged; in
either case the minimum returns $f(M)$. How small the
expressions can be made --- and, dually, which subfamilies of basic values still generate
--- connects to the minimal separating subfamilies
(Corollary~\ref{cor:subfamily}), which we do not pursue here.
\end{remark}

\subsection{An expression algorithm}\label{sec:algorithm}

Formula \eqref{eq:formula} is syntactic in $f$ except for the constant
$\kappa_f$. This yields the following algorithm. The input is a tropical rational
expression $f = P - Q$ in the variables $m_{i\alpha}$ that is invariant as a
function; the output is a tropical rational expression $g$ in the $K$
variables $w_c$ with $g \circ \Phi = f$.

\begin{enumerate}
\item \emph{Lipschitz bound.} Set
$L := \max_j \|a_j\|_1 + \max_k \|a'_k\|_1$, the sum of the
largest monomial slope $\ell^1$-norms of $P$ and $Q$; then $L \ge L_f$.
\item \emph{Coupling terms.} For each $\vec\sigma \in S_n^{\,r-1}$, build
$f_{\vec\sigma}$ by substituting
$m_{i\alpha} \leftarrow w_{\sigma_\alpha(i) e_\alpha} - w_{(\sigma_\alpha(i)-1) e_\alpha}$
(with $\sigma_1 = \mathrm{id}$) into $f$, and build $D_{\vec\sigma}$ as in
Lemma~\ref{lem:disc}, restricted to the coordinates $c$ with
at least two nonzero entries: single-column basic values of $M$ and of any
recoupling of $M$ coincide, so the restriction does not change
$D_{\vec\sigma}$ on the image of $\Phi$, and Lemma~\ref{lem:disc} holds
verbatim for it.
\item \emph{Constant.} If an explicit positive lower bound $c_0$
($\le c_{n,r}$) is
available, set $\kappa_f := \lceil L \sqrt{K} / c_0 \rceil$ and output
$g = \min_{\vec\sigma} ( f_{\vec\sigma} + \kappa_f D_{\vec\sigma} )$. Otherwise
run a doubling loop: for $\kappa_f = 1, 2, 4, \dots$ build $g_{\kappa_f}$ and test the
identity $g_{\kappa_f} \circ \Phi \equiv f$; output the first that passes.
\end{enumerate}

\begin{proposition}\label{prop:algorithm}
The algorithm is correct and terminates.
\end{proposition}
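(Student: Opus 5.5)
The proof reduces correctness and termination to Theorem~\ref{th:gen} and Lemmas~\ref{lem:disc} and~\ref{lem:lip}, plus decidability of the identity test. The argument runs in four steps.

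\emph{Step 1: the bound $L \ge L_f$.} On each cell of the common refinement used in the proof of Lemma~\ref{lem:lip}, the slope of $f = P - Q$ is $a_j - a'_k$ for some monomials $j$ of $P$ and $k$ of $Q$. Its $\ell^1$-norm is therefore at most $\|a_j\|_1 + \|a'_k\|_1 \le L$. Since $L_f$ is the maximum of these cell slopes, $L \ge L_f$, and the invariant Lipschitz bound of Lemma~\ref{lem:lip} holds with constant $L$.

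\emph{Step 2: the syntactic terms.} By Lemma~\ref{lem:marg}, substituting $m_{i\alpha} \leftarrow w_{\sigma_\alpha(i)e_\alpha} - w_{(\sigma_\alpha(i)-1)e_\alpha}$ produces exactly $f(R_{\vec\sigma}(s(w)))$. For the restricted discriminant, I check two facts.
\begin{itemize}
\item It is still tropical rational.
\item On $w = \Phi(M)$ it equals the full $D_{\vec\sigma}(w)$. The reason is that $R_{\vec\sigma}(s(w))$ has the same column multisets as $M$, so the single-column coordinates $b_{ie_\alpha}$, which are sums of the $i$ largest column entries, agree and contribute $0$ to the maximum.
\end{itemize}
Hence both the vanishing clause and the lower bound $c'_{n,r}\,d_\infty$ of Lemma~\ref{lem:disc} hold for the restricted version on the image of $\Phi$, which is all the proof of Theorem~\ref{th:gen} uses.

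\emph{Step 3: correctness of the output.} Suppose an explicit lower bound $c_0 \le c_{n,r}$ is given. Then $\kappa_f = \lceil L\sqrt K / c_0\rceil$ satisfies $\kappa_f \ge L/c'_{n,r} \ge L_f/c'_{n,r}$, and the proof of Theorem~\ref{th:gen} gives $g\circ\Phi = f$ directly. In the doubling branch, a candidate is output only after it passes the identity test, so any output is correct. For termination, observe that the proof of Theorem~\ref{th:gen} shows the identity holds for every integer $\kappa_f \ge L_f/c'_{n,r}$. This is a threshold property: a larger multiplier only raises the wrong-coupling terms, and the correct term has $D = 0$. Since $c'_{n,r} > 0$ exists by \eqref{eq:bilip}, some power of $2$ exceeds the threshold, so the loop stops after finitely many iterations.

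\emph{Step 4: decidability of the test (the main obstacle).} The test asks whether $g_{\kappa}\circ\Phi - f$ vanishes identically. Both $g_{\kappa}\circ\Phi$ and $f$ are explicit tropical rational expressions in the $m_{i\alpha}$ with integer slopes. Their difference $h$ is continuous piecewise linear with finitely many cells, each cell defined by finitely many linear inequalities among the affine terms of the max expressions. To decide $h \equiv 0$, I would enumerate the cells, one for each choice of active term in every max occurring in the expression. For each cell I would solve a linear programming feasibility problem to see whether it has nonempty interior. On every full-dimensional cell I would check that the resulting affine function is identically zero, by comparing coefficients and constants. By continuity, vanishing on the full-dimensional cells suffices. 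The point needing care is that nested maxima and absolute values must be expanded into this finite cell decomposition. That is routine but must be stated, for instance as a decision procedure in the existential theory of linear real arithmetic, which is decidable by Fourier--Motzkin elimination, applied to the sentence $\exists M\; h(M) \ne 0$.
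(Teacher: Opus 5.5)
Your proposal is correct and follows the paper's proof. Termination comes from the threshold $\kappa_f \ge L_f/c'_{n,r}$ of Theorem~\ref{th:gen}, and decidability of the identity test comes from the same cell decomposition: linear programming finds the full-dimensional cells, and the affine piece on each is checked coefficient by coefficient. Your Steps~1--2 and the explicit-constant branch spell out what the paper leaves to the algorithm's description, while the paper additionally records the halting bound $\kappa_f \le \max\{1, 2L_f/c'_{n,r}\}$.
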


\begin{proof}
The identity test in step 3 is decidable. The expression
$h := g_{\kappa_f} \circ \Phi - f$ agrees with a single affine function on
each cell of the hyperplane arrangement generated by the breakpoints of
its constituent maxima. An interior point of a cell --- computable by
linear programming --- determines the maximizing term of each constituent
maximum on that cell, hence the affine function in closed form, and $h$
vanishes identically if and only if each of these finitely many affine
functions is the zero function, a check of the coefficients. By
Theorem~\ref{th:gen} the test passes as soon as $\kappa_f \ge L_f/c'_{n,r}$, so
the doubling loop halts, with $\kappa_f \le \max\{1,\, 2L_f/c'_{n,r}\}$.
\end{proof}

\begin{remark}[output size]\label{rem:prototype}
The output is the expression
$g = \min_{\vec\sigma} ( f_{\vec\sigma} + \kappa_f D_{\vec\sigma} )$ itself:
$(n!)^{r-1}$ coupling terms, each a copy of
$f$ with variables substituted plus $\kappa_f$ times a discriminant with at most
$K - nr$ absolute values ($\max$ commutes with positive scaling, so the
multiplication by $\kappa_f$ only scales exponents). At $(n,r) = (2,2)$ each
term carries a single absolute value, as in Remark~\ref{rem:size}.
\end{remark}

\section{Arbitrary permutation groups}\label{sec:general}

The proof of Theorem~\ref{th:gen} used nothing specific to row permutations beyond
two structural features, which we now isolate. Let $G \le S_N$ act on
$\R^N$ by coordinate permutations and $d$ be the quotient metric. Write
\[
\varepsilon_k(x) \;:=\; \max_{\#I = k} \sum_{i \in I} x_i,
\qquad k \in \{1, \dots, N\}, \qquad \varepsilon_0 := 0,
\]
for the tropical elementary symmetric polynomials. Equivalently,
$\varepsilon_k$ is the basic value
$b_{(k)}$ for $r = 1$: it is $S_N$-invariant, hence $G$-invariant, of
degree $k$, and $\varepsilon_k(x) - \varepsilon_{k-1}(x)$ is the $k$th
largest entry of $x$, so that $\varepsilon_1, \dots, \varepsilon_N$
recover the sorted vector $x^\downarrow$ (Lemma~\ref{lem:marg} for $r = 1$).

\begin{theorem}[axiomatic form]\label{th:axiomatic}
Let $\mathcal F = (f_1, \dots, f_{\#\mathcal F})$ be a finite tuple of
$G$-invariant tropical rational functions on $\R^N$ such that
\begin{itemize}
\item[\textup{(S)}] the map
$\Phi_{\mathcal F} : \R^N \to \R^{\#\mathcal F}$,
$x \mapsto (f_1(x), \dots, f_{\#\mathcal F}(x))$, separates $G$-orbits
and satisfies the lower Lipschitz bound
$\|\Phi_{\mathcal F}(x) - \Phi_{\mathcal F}(y)\|_2 \ge c_{\mathcal F}\, d([x],[y])$ for
some $c_{\mathcal F} > 0$ ($\Phi_{\mathcal F}$ is then in fact bi-Lipschitz: the upper bound is
automatic by Lemma~\ref{lem:lip}, each $f_i$ being tropical rational and
invariant);
\item[\textup{(T)}] there is a finite index set $J$ and componentwise
tropical rational maps $T_j : \R^{\#\mathcal F} \to \R^N$, $j \in J$, such that
every $x \in \R^N$
satisfies $[T_j(\Phi_{\mathcal F}(x))] = [x]$ for at least one $j$.
\end{itemize}
Then $\mathcal F$ generates the semifield of $G$-invariant tropical rational
functions: for every invariant $f$ and every sufficiently large
integer $\kappa_f$,
\[
f \;=\; \Bigl( \min_{j \in J} \bigl[ f \circ T_j + \kappa_f\, D_j \bigr] \Bigr) \circ \Phi_{\mathcal F},
\qquad
D_j(w) := \max_{1 \le i \le \#\mathcal F} \bigl| w_i - f_i(T_j(w)) \bigr| .
\]
\end{theorem}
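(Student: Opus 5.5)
The plan is to transcribe the proof of Theorem~\ref{th:gen} into this abstract setting, with the axioms (S) and (T) taking over the roles of Lemmas~\ref{lem:sheet} and~\ref{lem:disc}. Set $g(w) := \min_{j \in J} [ f(T_j(w)) + \kappa_f D_j(w) ]$ on $\R^{\#\mathcal F}$. The first step is tropical rationality of $g$. Each $f \circ T_j$ and each $f_i \circ T_j$ is tropical rational by the composition closure of Section~\ref{sec:prelim}, since $T_j$ is componentwise tropical rational. Then $|a-b| = \max(a-b,b-a)$, finite maxima and minima, and integer scaling all stay in the class. Hence $g$ is tropical rational, and $g \circ \Phi_{\mathcal F}$ lies in the sub-semifield generated by $\mathcal F$.

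Next, fix $x$, put $w = \Phi_{\mathcal F}(x)$ and $\hat x_j := T_j(w)$. For the upper bound $g(w) \le f(x)$, axiom (T) gives some $j$ with $[\hat x_j] = [x]$. By invariance of the $f_i$ we get $f_i(\hat x_j) = f_i(x) = w_i$, so $D_j(w) = 0$. By invariance of $f$ the $j$th term equals $f(x)$.

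For the lower bound, observe first that $D_j(w) = \|\Phi_{\mathcal F}(x) - \Phi_{\mathcal F}(\hat x_j)\|_\infty$. The norm comparison and (S) give
\[
D_j(w) \;\ge\; \frac{c_{\mathcal F}}{\sqrt{\#\mathcal F}}\, d([x],[\hat x_j]) \;\ge\; c'_{\mathcal F}\, d_\infty([x],[\hat x_j]),
\qquad c'_{\mathcal F} := \frac{c_{\mathcal F}}{\sqrt{\#\mathcal F}},
\]
using $d \ge d_\infty$ for the $G$-quotient metrics, which holds because $\|\cdot\|_2 \ge \|\cdot\|_\infty$ pointwise before minimizing over $G$. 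Lemma~\ref{lem:lip}, already stated for general $G$, gives $f(\hat x_j) \ge f(x) - L_f\, d_\infty([x],[\hat x_j])$. Every term is therefore at least $f(x) + (\kappa_f c'_{\mathcal F} - L_f)\, d_\infty \ge f(x)$ once $\kappa_f \ge L_f / c'_{\mathcal F}$.

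I expect no real obstacle, since every ingredient was already proved for general $G$. The only points needing care are checking that Lemma~\ref{lem:lip} and the comparison $d \ge d_\infty$ are valid for the $G$-quotient rather than the row-permutation quotient, and noting the parenthetical upper Lipschitz claim in (S). That claim follows by applying Lemma~\ref{lem:lip} to each invariant $f_i$ and combining the coordinates with $d_\infty \le d$.
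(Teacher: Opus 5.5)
Your proposal is correct and follows the paper's proof essentially step for step. You use the same three ingredients: the term indexed by a $j$ realizing (T) has vanishing discriminant and value $f(x)$; (S) gives $D_j(\Phi_{\mathcal F}(x)) \ge (c_{\mathcal F}/\sqrt{\#\mathcal F})\, d([x],[T_j(\Phi_{\mathcal F}(x))])$; and the invariant Lipschitz bound of Lemma~\ref{lem:lip} combines with these via $d_\infty \le d$, giving the same threshold $\kappa_f \ge L_f\sqrt{\#\mathcal F}/c_{\mathcal F}$. Your side checks on $d \ge d_\infty$ for the $G$-quotient and on the parenthetical upper Lipschitz claim are correct details that the paper leaves implicit.
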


\begin{proof}
As for Theorem~\ref{th:gen}, taking any integer $\kappa_f \ge L_f \sqrt{\#\mathcal F} / c_{\mathcal F}$
with $L_f$ an $\ell^\infty$-Lipschitz constant of $f$ (Lemma~\ref{lem:lip}). Each term is tropical rational. At $w = \Phi_{\mathcal F}(x)$,
\[
D_j(w) \;=\; \|\Phi_{\mathcal F}(x) - \Phi_{\mathcal F}(T_j(w))\|_\infty :
\]
this vanishes for any $j$ realizing \textup{(T)}, and in general is at least
$(c_{\mathcal F}/\sqrt{\#\mathcal F})\, d([x], [T_j(w)])$ by \textup{(S)}. The realized term
equals $f(x)$, and every term is at least
\[
f(x) - L_f\, d_\infty([x],[T_j(w)]) + \kappa_f\, (c_{\mathcal F}/\sqrt{\#\mathcal F})\, d([x],[T_j(w)])
\;\ge\; f(x),
\]
since $d_\infty \le d$ and $\kappa_f\, (c_{\mathcal F}/\sqrt{\#\mathcal F}) \ge L_f$.
\end{proof}

\begin{corollary}[quadratic degree bound for all permutation groups]\label{cor:quadratic}
For every permutation group $G \le S_N$, the semifield of $G$-invariant
tropical rational functions on $\R^N$ is generated by finitely many
invariant tropical polynomials of degree at most $\max\{N,\, \binom N2\}$.
\end{corollary}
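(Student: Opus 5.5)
The plan is to apply Theorem~\ref{th:axiomatic} to an explicit family $\mathcal F$ made of two kinds of invariants. The first kind is the tropical elementary symmetric polynomials $\varepsilon_1, \dots, \varepsilon_N$, of degrees $\le N$. The second kind is the max filters
\[
\mu_z(x) \;:=\; \max_{g \in G} \langle z,\, g\cdot x\rangle ,
\qquad z \in Z := \{\text{permutations of } (0,1,\dots,N-1)\} .
\]
Each $\mu_z$ is a $G$-invariant tropical polynomial (a maximum of monomials with exponent vectors in $\N^N$) of degree $|z| = \binom N2$. So every member of $\mathcal F$ has degree at most $\max\{N,\binom N2\}$, and $\mathcal F$ is finite.

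\emph{Axiom (T).} Take $J = S_N$ and, for $\pi \in S_N$, put $T_\pi(w) := \pi\cdot s(w)$, where $s(w)_k := w_{\varepsilon_k} - w_{\varepsilon_{k-1}}$ (Lemma~\ref{lem:marg} for $r=1$). Each $T_\pi$ is componentwise tropical rational, since its coordinates are differences of coordinates. At $w = \Phi_{\mathcal F}(x)$ we get $s(w) = x^\downarrow$, and $x = \pi\cdot x^\downarrow$ for some $\pi$. Hence $[T_\pi(\Phi_{\mathcal F}(x))] = [x]$ for that $\pi$, even as points rather than only as $G$-orbits.

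\emph{Axiom (S), separation.} Suppose $\Phi_{\mathcal F}(x) = \Phi_{\mathcal F}(y)$. Equal $\varepsilon$'s give $x^\downarrow = y^\downarrow$. Choose $z \in Z$ strictly ordered compatibly with $x$: whenever $z_i > z_j$ we have $x_i \ge x_j$, which is possible by breaking ties in $x$ arbitrarily. By the rearrangement inequality, every $u$ with $u^\downarrow = x^\downarrow$ satisfies $\langle z, u\rangle \le \langle z^\downarrow, x^\downarrow\rangle$. Equality holds exactly when $u$ is weakly ordered like $z$, and since $z$ has distinct entries this forces $u = x$. Taking $u = x$ shows $\mu_z(x)$ attains the top value, so $\mu_z(y)$ does too. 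Hence some $g \in G$ satisfies $g\cdot y = x$, and therefore $[x] = [y]$.

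\emph{Axiom (S), Lipschitz bound.} Every member of $\mathcal F$ is a max filter for $G$: this holds for $\mu_z$ by definition, and $\varepsilon_k$ is a maximum over templates in $\{0,1\}^N$ of $\langle z, g x\rangle$, which one can also write as a $G$-max filter since it is $S_N$-invariant. Therefore $\Phi_{\mathcal F}$ is an injective max filter bank. By the result of \cite{BT} already invoked in Corollary~\ref{cor:subfamily}, it satisfies a lower Lipschitz bound with some $c_{\mathcal F} > 0$.

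Theorem~\ref{th:axiomatic} then gives generation. The case $N \le 2$ is covered by the $\varepsilon_k$ alone, consistently with $\max\{N,\binom N2\} = N$ there; indeed, $\mu_z$ with $z=(0,1)$ has degree $1$.

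The step I expect to be delicate is the separation argument in the presence of ties. The key point is that the template must be strictly ordered, so that the equality case of the rearrangement inequality pins down $g\cdot y = x$ exactly rather than only up to permutations among tied coordinates. The second point to check is that the cited bi-Lipschitz theorem for max filter banks applies to templates with integer entries and to the $\varepsilon_k$ written as max filters. If that citation is awkward, I would drop the $\varepsilon_k$ from the max-filter framing. I would instead use them only through $s$, and prove the lower Lipschitz bound directly for the finite piecewise linear injective map $\Phi_{\mathcal F}$ by a compactness-plus-homogeneity argument on the quotient.
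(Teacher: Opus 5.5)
Your architecture is the paper's: Theorem~\ref{th:axiomatic}, with (T) checked by sorting and $T_\pi = \pi\cdot s$ exactly as there. Your rearrangement argument for separation is correct; the paper does not prove separation but imports all of (S) from \cite[Theorem~1.3]{Derksen}.

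The gap is the lower Lipschitz bound. Your claim that $\varepsilon_k$ ``can also be written as a $G$-max filter since it is $S_N$-invariant'' is false in general. For trivial $G$ and $N \ge 2$, every $G$-max filter $\max_{g\in G}\langle z, g\cdot x\rangle$ is linear, while $\varepsilon_1 = \max_i x_i$ is not. More generally, the slopes of the linear pieces of a $G$-max filter form the single orbit $G\cdot z$, so $\varepsilon_k$ is one $G$-max filter only if $G$ is transitive on $k$-subsets. (In Corollary~\ref{cor:subfamily} the citation is legitimate because there the group is $S_n$ itself and each $b_c$ is an $S_n$-max filter.) Hence $\Phi_{\mathcal F}$ is not a $G$-max filter bank, and \cite{BT} does not apply.

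The device of Corollary~\ref{cor:gencount} does not rescue this either. That device applies \cite{BT} to a genuine max-filter sub-bank and then appends the $\varepsilon_k$, which only enlarges $\|\cdot\|_2$. But your $\mu_z$ alone are not injective: for $G = S_N$ every $\mu_z$ equals $\sum_k (N-k)\,x^\downarrow_k$. Your fallback is also incomplete as described. Homogeneity and compactness only reduce matters to pairs of nearby points, and the lower bound there is the real content; it needs the piecewise linear structure, which your sketch does not use.

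The repair is short; there are three options.
\begin{enumerate}
\item Take $\mathcal F$ from \cite[Theorem~1.3]{Derksen}, as the paper does, so that only (T) needs checking.
\item Add to your family the genuine $G$-max filters $f_{\pi\cdot\mathbf{1}_{[k]}}$, for $\pi\in S_N$ and $1\le k\le N$, where $\mathbf{1}_{[k]}$ is the $0/1$ vector with $k$ leading ones. These have degree $k\le N$, and $\varepsilon_k = \max_{\pi} f_{\pi\cdot\mathbf{1}_{[k]}}$. So the bank formed by these and the $\mu_z$ is injective by your own argument and has nonzero templates (for $N\ge 2$). Then \cite[Corollary~1.5]{BT} gives its lower Lipschitz bound. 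Appending $\varepsilon_1,\dots,\varepsilon_N$ preserves that bound, and (T) is unchanged.
\item Complete your fallback without compactness. The functions $\|\Phi_{\mathcal F}(x)-\Phi_{\mathcal F}(y)\|_\infty$ and $d_\infty([x],[y])$ are continuous, positively homogeneous, piecewise linear functions of $(x,y)$. By separation they have the same zero set. Both are linear on each cone of a common finite conic subdivision of $\R^{2N}$. Comparing them on the finitely many generators of each cone (both vanish on its lineality space) yields a uniform constant.
\end{enumerate}

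Finally, $\mu_{h\pi z^\ast} = \mu_{\pi z^\ast}$ for $h\in G$, with $z^\ast=(0,1,\dots,N-1)$. So your family has at most $N + N!/|G|$ distinct members, the size of the family the paper uses.
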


\begin{proof}
Take $\mathcal F$ to be the separating, bi-Lipschitz family of
\cite[Theorem~1.3]{Derksen} --- of degree at most $\max\{N, \binom N2\}$ and
size $N + N!/|G|$, including
$\varepsilon_1, \dots, \varepsilon_N$. For \textup{(T)}: taking
consecutive differences of the $N$ coordinates carried by
$\varepsilon_1, \dots, \varepsilon_N$ defines a componentwise tropical
rational map $s : \R^{\#\mathcal F} \to \R^N$ (each component a difference of two
coordinates) with $s(\Phi_{\mathcal F}(x)) = x^\downarrow$; for $\pi \in S_N$ set
$T_\pi(w) := \pi \cdot s(w)$. Every $x$ equals
$\pi \cdot x^\downarrow$ for some $\pi$, so $[T_\pi(\Phi_{\mathcal F}(x))] = [x]$ for
that $\pi$. Theorem~\ref{th:axiomatic} applies with $J = S_N$; terms with $\pi$ in
the same right coset $G\pi$ coincide --- for $g \in G$ we have
$T_{g\pi}(w) = g \cdot T_\pi(w)$, and $f$ and the $f_i$ are $G$-invariant
--- so $N!/|G|$ terms suffice.
\end{proof}

\begin{remark}\label{rem:quadraticvsprimorial}
Corollary~\ref{cor:quadratic} replaces the degree bound
$N p_1 p_2 \cdots p_{|G|}$ of \cite[Theorem~1.2]{Derksen}, which depends
on the group order through the primorial factor, by a bound quadratic in
$N$ alone. The two bounds are incomparable: for groups of bounded order
the primorial bound is linear in $N$, while the quadratic bound is at most
the primorial one as soon as $p_1 \cdots p_{|G|} \ge N$ --- for instance
whenever $|G| \ge \log_2 N$ --- and for the row permutation action the
gain is superexponential. The row permutation case also shows
that group-specific families can lower the degree further (to $n$, with
$\binom{n+r}{r}-1$ generators); uniformly over all permutation groups,
however, the quadratic bound cannot be improved:
Theorem~\ref{th:alternating} and Corollary~\ref{cor:anlower} below show that every separating ---
in particular every generating --- family of $A_N$-invariant tropical
polynomials contains a member of degree at least $\binom N2$.
\end{remark}

The size $N + N!/|G|$ of the family used in Corollary~\ref{cor:quadratic} can also
be brought down to linear in $N$, at the price of the degree bound and of
explicitness. Here we use that $G$ acts on $\R^N$ by permutation matrices,
hence is a finite subgroup of the orthogonal group $\operatorname{O}(N)$;
this is the setting of \cite{CIMP} and \cite{BT}, both invoked below.
For $z \in \R^N$ write
$f_z(x) := \max_{g \in G}\, \langle x, g \cdot z \rangle$ for the
\emph{max filter} with template $z$; when $z \in \N^N$, $f_z$ is (the
evaluation of) a $G$-invariant tropical polynomial of degree $|z|_1$,
namely the $G$-trace $\bigoplus_{g \in G} x^{g \cdot z}$ of the tropical
monomial $x^z$. Note that
$\varepsilon_N = f_{(1,\dots,1)}$.

\begin{corollary}[linear-size separation by invariant tropical polynomials]\label{cor:count}
For every permutation group $G \le S_N$ there exist
$z_1, \dots, z_{2N} \in \N^N \setminus \{0\}$ such that the $2N + 1$
invariant tropical polynomials
\[
f_{z_1},\ f_{z_2},\ \dots,\ f_{z_{2N}},\ \varepsilon_N
\]
separate $G$-orbits and induce a bi-Lipschitz embedding of $\R^N / G$
into $\R^{2N+1}$. Every family of continuous
invariants that separates $G$-orbits has at least $N$ members.
\end{corollary}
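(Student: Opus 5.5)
The plan is to invoke the genericity theorem of Cahill, Iverson, Mixon, and Packer \cite{CIMP} for max filters. It states that for a finite subgroup $G \le \operatorname{O}(N)$, the max filter bank $x \mapsto (f_{z_1}(x), \dots, f_{z_m}(x))$ with $m = 2N$ templates is injective on $G$-orbits for generic $(z_1, \dots, z_m) \in (\R^N)^m$. Generic here means outside a closed semialgebraic set of dimension strictly less than $mN$, i.e.\ off a set of measure zero. The first step is to pass from generic real templates to templates in $\N^N \setminus \{0\}$. The failure set is semialgebraic of positive codimension, and it is invariant under positive scaling of each template, since $f_{tz} = t f_z$ for $t > 0$. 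Hence its complement meets every nonempty open cone. In particular it meets the open positive orthant cone of $(\R^N)^{2N}$, so there is a good tuple with all entries strictly positive.

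The next step is to move from positive real templates to integer ones. Here I would use that the complement of the failure set is open, or more carefully that it has nonempty interior near the chosen point. Since a proper closed semialgebraic set has dense open complement, I can pick a small open ball of good tuples inside the positive orthant. The positive rational tuples in that ball are dense, so one of them is good. Clearing denominators by a common positive integer preserves injectivity, because it only rescales each $f_{z_i}$. This gives $z_i \in \N^N \setminus \{0\}$, so the $f_{z_i}$ are $G$-invariant tropical polynomials.

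Adding $\varepsilon_N$ does not hurt separation. For bi-Lipschitz, I would invoke \cite{BT}: an injective max filter bank is bi-Lipschitz with respect to the quotient metric. Since $\varepsilon_N = f_{(1,\dots,1)}$ is itself a max filter, the augmented bank of $2N+1$ filters is still an injective max filter bank and hence bi-Lipschitz. Alternatively, appending a coordinate only increases $\|\Phi(x)-\Phi(y)\|_2$, and the upper bound follows from Lemma~\ref{lem:lip}.

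For the lower bound on the number of separating invariants, I would argue by dimension. A continuous map $\R^N/G \to \R^m$ that separates orbits restricts to a continuous injection on an open set $U \subset \R^N$ where $G$ acts freely and $U \cap gU = \emptyset$ for $g \ne 1$; such a set exists because the union of the fixed hyperplanes $\{x : x = g x\}$, $g \neq 1$, is a proper closed set. This gives a continuous injection from an open subset of $\R^N$ into $\R^m$. Invariance of domain, or Borsuk--Ulam applied to a small sphere, then forces $m \ge N$.

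I expect the main obstacle to be justifying that the good set of templates contains integer points. The precise form of the genericity statement in \cite{CIMP} (open dense complement versus merely full measure) matters here. If it only guarantees full measure, I would argue instead that injectivity of the bank is an open condition on templates. I am not sure it is open in general, so the fallback is to use the semialgebraic structure, which makes the bad set closed of lower dimension. Rational density plus homogeneity then finishes the argument.
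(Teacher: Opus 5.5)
Your proof is correct. It differs from the paper's only at the integrality step, where your route is slightly stronger. The paper takes an arbitrary rational tuple in the open dense set of good templates and clears denominators to get $\tilde z_i \in \Z^N$, possibly with negative entries. It then shifts $z_i := \tilde z_i + u_i(1,\dots,1)$. Since $f_{z_i} = f_{\tilde z_i} + u_i\,\varepsilon_N$, the displayed family is an invertible affine image of $(f_{\tilde z_1},\dots,f_{\tilde z_{2N}},\varepsilon_N)$, so $\varepsilon_N$ is genuinely needed to transfer separation.

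You instead intersect the open dense good set with the open positive orthant of $(\R^N)^{2N}$ and take a positive rational tuple there, so no shift is needed. Consequently your argument shows that the $2N$ filters $f_{z_1},\dots,f_{z_{2N}}$ alone already separate orbits and, by \cite{BT}, are bi-Lipschitz; $\varepsilon_N$ is appended only to match the statement. The paper's shift trick has a different use: it turns any integer separating tuple, however obtained, into one with nonnegative entries, at the cost of one extra invariant. Your lower bound (a small ball around a point with trivial stabilizer, disjoint from its $G$-translates, plus invariance of domain) is the paper's free-locus argument made concrete.

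One correction to the justification. Semialgebraicity does not make the failure set $\mathcal Z$ closed. Also, ``proper closed semialgebraic set'' does not guarantee a dense complement: a closed half-space is one. What you need, and what the paper uses via \cite{Coste}, is that the closure of a semialgebraic set is semialgebraic of the same dimension. Then $\overline{\mathcal Z}$ has dimension at most $2N^2-1$, hence empty interior, so its complement is open dense and meets the positive orthant in a nonempty open set containing rational points.

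You were right that a full-measure statement alone would not yield rational points. Scaling invariance plays no role in your first step, since density of the complement already suffices. That step is in any case subsumed by the second.
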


\begin{proof}
The proof of \cite[Corollary~13]{CIMP} shows, for finite
$G \le \operatorname{O}(N)$ and $2N$ templates, that the set
$\mathcal Z \subseteq (\R^N)^{2N}$ of template tuples whose max filter
bank $(f_{z_1}, \dots, f_{z_{2N}})$ fails to separate $G$-orbits is
semialgebraic of dimension at most $2N^2 - 1$, less than the
dimension $2N^2$ of the ambient space $(\R^N)^{2N} \cong \R^{2N^2}$. Hence
almost every tuple separates: by \cite{Coste} the closure
$\overline{\mathcal Z}$ is
semialgebraic of the same dimension, still below $2N^2$, so it has empty
interior (a semialgebraic set containing a nonempty open set has full
dimension $2N^2$), and the separating tuples --- the complement of
$\overline{\mathcal Z}$ --- form an open dense set, which therefore
contains a rational tuple; scaling each template by a positive integer preserves
separation, since $f_{c z} = c\, f_z$ for $c > 0$, so we may take a
separating tuple $\tilde z_1, \dots, \tilde z_{2N} \in \Z^N$.
To make the $f_{z_i}$ tropical polynomials rather than merely rational, we
clear the negative entries: choose $u_i \in \N$ with
$z_i := \tilde z_i + u_i\, (1, \dots, 1) \in \N^N \setminus \{0\}$. Since
$\langle x, g \cdot (1,\dots,1) \rangle = \sum_j x_j$ for every
$g \in G$, we have $f_{z_i} = f_{\tilde z_i} + u_i\, \varepsilon_N$, so
the displayed family is the image of the separating family
$(f_{\tilde z_1}, \dots, f_{\tilde z_{2N}}, \varepsilon_N)$ under an
invertible affine change of coordinates on $\R^{2N+1}$; since the latter
separates $G$-orbits, so does the former. Being orbit-separating, the
displayed family is an injective max filter bank with nonzero windows
$z_1, \dots, z_{2N}, (1,\dots,1)$, so \cite[Corollary~1.5]{BT} yields the
bi-Lipschitz bound.

For the lower bound, the free locus of $G$ --- the set of points with
trivial stabilizer, that is, the complement of the finitely many proper
subspaces $\{x : g \cdot x = x\}$, $g \ne \mathrm{id}$ --- is open and
dense, and its image in $\R^N/G$ is a
topological $N$-manifold. A family of $m$ continuous invariants that
separates orbits induces a continuous injection of this manifold into
$\R^m$, which forces $m \ge N$ by invariance of domain.
\end{proof}

\begin{corollary}[linear-size generation]\label{cor:gencount}
For every permutation group $G \le S_N$, the semifield of $G$-invariant
tropical rational functions on $\R^N$ is generated by the $3N$ invariant
tropical polynomials
\[
f_{z_1},\ \dots,\ f_{z_{2N}},\
\varepsilon_1,\ \dots,\ \varepsilon_N
\]
with $z_1, \dots, z_{2N}$ as in Corollary~\ref{cor:count}. Every generating set has
at least $N$ members.
\end{corollary}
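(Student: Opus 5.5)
The plan is to apply Theorem~\ref{th:axiomatic} to the family $\mathcal F = (f_{z_1}, \dots, f_{z_{2N}}, \varepsilon_1, \dots, \varepsilon_N)$ and to verify its two axioms. The lower bound is a separate dimension argument. Both axioms are obtained by reusing pieces already in place: condition~(S) from Corollary~\ref{cor:count}, and condition~(T) from the proof of Corollary~\ref{cor:quadratic}.

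For (S), Corollary~\ref{cor:count} shows that the subfamily $(f_{z_1}, \dots, f_{z_{2N}}, \varepsilon_N)$ separates $G$-orbits. It also satisfies $\|\Phi(x)-\Phi(y)\|_2 \ge c\, d([x],[y])$ for some $c>0$. Our family $\mathcal F$ contains this subfamily, since $\varepsilon_N$ is among $\varepsilon_1,\dots,\varepsilon_N$. Adding coordinates can only increase $\|\Phi_{\mathcal F}(x) - \Phi_{\mathcal F}(y)\|_2$, so $\Phi_{\mathcal F}$ separates orbits and satisfies the lower Lipschitz bound with the same constant $c_{\mathcal F} := c$. Every member of $\mathcal F$ is a $G$-invariant tropical polynomial, hence a $G$-invariant tropical rational function, as the theorem requires.

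For (T), I follow Corollary~\ref{cor:quadratic} verbatim. Consecutive differences of the coordinates carrying $\varepsilon_1,\dots,\varepsilon_N$ give a componentwise tropical rational map $s:\R^{3N}\to\R^N$ with $s(\Phi_{\mathcal F}(x)) = x^\downarrow$; this is Lemma~\ref{lem:marg} for $r=1$. For $\pi\in S_N$ set $T_\pi(w) := \pi\cdot s(w)$, which is again componentwise tropical rational because it only permutes components. Every $x$ equals $\pi\cdot x^\downarrow$ for some $\pi$, so for that $\pi$ we have $[T_\pi(\Phi_{\mathcal F}(x))]=[x]$. Theorem~\ref{th:axiomatic} then gives generation, with $J = S_N$; one may reduce $J$ to coset representatives as before.

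For the lower bound, suppose a set $\mathcal G$ generates. Then every invariant tropical rational function is $g\circ\Phi_{\mathcal G}$ for some tropical rational $g$. In particular each member of the separating family of Corollary~\ref{cor:count} factors through $\Phi_{\mathcal G}$, so $\Phi_{\mathcal G}$ separates orbits. Its members are continuous, so the lower bound of Corollary~\ref{cor:count} gives $\#\mathcal G \ge N$. This presumes $\mathcal G$ is finite; if it is infinite the bound is trivial.

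I expect no real obstacle. The only point needing care is that enlarging a family preserves the lower Lipschitz constant, and this is immediate from the definition of the $\ell^2$ norm.
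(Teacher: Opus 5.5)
Your proposal is correct and follows the paper's proof essentially step for step. Condition (S) is inherited from the bi-Lipschitz sub-block of Corollary~\ref{cor:count}, since adjoining coordinates only enlarges the $\ell^2$ norm. Condition (T) comes from the sorting maps $T_\pi = \pi \cdot s$ of Corollary~\ref{cor:quadratic}. The lower bound follows because a generating set separates orbits, so the invariance-of-domain count of Corollary~\ref{cor:count} applies.
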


\begin{proof}
The family satisfies \textup{(S)} of Theorem~\ref{th:axiomatic}: it contains the
separating bi-Lipschitz family of Corollary~\ref{cor:count} as a sub-block, and
appending the coordinates $\varepsilon_1, \dots, \varepsilon_{N-1}$
preserves injectivity and, since adjoining coordinates only enlarges
$\|\cdot\|_2$, the lower Lipschitz bound with the same constant. Property \textup{(T)} holds by the
sorting argument in the proof of Corollary~\ref{cor:quadratic}, which uses only
$\varepsilon_1, \dots, \varepsilon_N$: the map $s$ built there from their
coordinates, together with $T_\pi(w) = \pi \cdot s(w)$, gives
$[T_\pi(\Phi_{\mathcal F}(x))] = [x]$ for some $\pi$, and $N!/|G|$ maps suffice.
Theorem~\ref{th:axiomatic} applies. For the lower bound, a generating set
separates orbits --- any tropical rational expression in the generators is
a function of their values, and the family of Corollary~\ref{cor:count} separates
orbits --- so Corollary~\ref{cor:count} gives $m \ge N$.
\end{proof}

\begin{remark}[count versus degree]\label{rem:countvsdegree}
Corollaries \ref{cor:count} and~\ref{cor:gencount} are optimal in the number of invariants up
to a factor of about $2$ and $3$ respectively, but control neither the
size of the template entries nor, consequently, the degrees $|z_i|_1$:
the genericity argument shows that suitable integer templates exist
without bounding their entries. Relative to \cite{CIMP}, where $2N$
random real templates are shown to separate,
the content of Corollary~\ref{cor:count} is that the templates can be taken in
$\N^N$, so that the separating family consists of invariant tropical
polynomials. The corollaries complement, rather than replace,
Corollary~\ref{cor:quadratic}, whose family has quadratic degree but size
$N + N!/|G|$; we know of no family of invariant tropical polynomials that
is simultaneously of linear size and of polynomially bounded degree, and
producing explicit templates is open. For the row permutation action of
$S_n$ on $\R^{n \times r}$ ($N = nr$), the basic values furnish a third
point of comparison: they are explicit and of degree at most $n$ --- below
even the quadratic degree of Corollary~\ref{cor:quadratic} --- but number
$\binom{n+r}{r}-1$, exponentially more than the $2nr + 1$ of
Corollary~\ref{cor:count} when $n$ and $r$ grow together (for $r = 1$ the two agree
up to a constant factor). Whether the row permutation action admits an
\emph{explicit} separating family that is at once of size polynomial in
$nr$ and of degree bounded by a polynomial in $n$ we leave open; the
minimal separating sets studied in \cite{LopatinReimers} are a point of
contact. The basic family also drives the explicit re-assembly of
Theorem~\ref{th:gen}.
\end{remark}

\medskip
The count lower bound of Corollary~\ref{cor:count} is complemented by a degree
lower bound, which shows that the quadratic bound of Corollary~\ref{cor:quadratic}
cannot be improved uniformly over all permutation groups: the alternating
group attains it. The proof rests on a normal form for invariant tropical
polynomials in terms of the traces $f_z$.

\begin{lemma}[trace normal form]\label{lem:tracenf}
Let $G \le S_N$ and let $f$ be a $G$-invariant tropical polynomial on
$\R^N$ of degree at most $d$. Then, as a function,
\[
f \;=\; \max_{j \in J}\, \bigl( a_j + f_{z_j} \bigr)
\]
for a finite set $J$, reals $a_j$, and templates $z_j \in \N^N$ with
$|z_j|_1 \le d$. Consequently, for $x, y \in \R^N$, the $G$-invariant
tropical polynomials of degree at most $d$ separate $[x]$ from $[y]$ if
and only if some trace $f_z$ with $z \in \N^N$, $|z|_1 \le d$, does.
\end{lemma}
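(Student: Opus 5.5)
The plan is to symmetrize a chosen representation of $f$ over $G$ and then regroup the resulting terms by orbits of exponents.

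Fix a representation $f = \max_{k} (\gamma_k + \langle a_k, x\rangle)$ with $a_k \in \N^N$ and $\|a_k\|_1 \le d$; one exists because $f$ has degree at most $d$. Since $f$ is $G$-invariant as a function, $f(x) = f(g^{-1}\cdot x)$ for every $g \in G$. Hence
\[
f(x) = \max_{g\in G} f(g^{-1}\cdot x) = \max_{g \in G}\max_k \bigl(\gamma_k + \langle a_k, g^{-1}\cdot x\rangle\bigr).
\]
Because $g$ acts by a permutation matrix, which is orthogonal, $\langle a_k, g^{-1}\cdot x\rangle = \langle g\cdot a_k, x\rangle$. Interchanging the two maxima gives
\[
f(x) = \max_k \Bigl(\gamma_k + \max_{g\in G}\langle x, g\cdot a_k\rangle\Bigr) = \max_k \bigl(\gamma_k + f_{a_k}(x)\bigr).
\]
This is the normal form with $J$ the index set of the terms, $a_j = \gamma_j$ and $z_j = a_j$. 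Since $|z_j|_1 = \|a_j\|_1 \le d$, the degree bound holds. The only point to check is the convention for the action, $g\cdot z$ versus $g^{-1}\cdot z$. This does not matter, because $f_z$ takes the maximum over all of $G$, which is closed under inverses.

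For the separation consequence, one direction is immediate. Each trace $f_z$ with $z \in \N^N$ and $|z|_1 \le d$ is itself a $G$-invariant tropical polynomial of degree at most $d$, so if it separates $[x]$ from $[y]$, the family does too.

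For the converse, suppose every such trace satisfies $f_z(x) = f_z(y)$. Take any $G$-invariant tropical polynomial $f$ of degree at most $d$ and write it in the normal form. Then
\[
f(x) = \max_j \bigl(a_j + f_{z_j}(x)\bigr) = \max_j \bigl(a_j + f_{z_j}(y)\bigr) = f(y),
\]
so no member of the family separates $[x]$ from $[y]$.

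I expect no real obstacle. The one subtlety is the degree convention: degree is the minimum over representations of the largest $\|a_j\|_1$, so I must start from a representation that attains it. Such a representation exists by definition of degree as a minimum over a set of integers. Symmetrizing that representation does not change any $\|a_j\|_1$, so the bound $d$ passes to the templates.
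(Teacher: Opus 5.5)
Your proposal is correct and follows essentially the same route as the paper: you take a representation attaining the degree, symmetrize over $G$ using orthogonality of permutation matrices, and interchange the maxima to get $\max_j(a_j + f_{z_j})$. You then obtain the separation statement from the normal form in one direction and from each trace being itself an invariant of degree at most $d$ in the other (cosmetically, you reuse the letter $a$ for both the exponent vectors and the constants of the statement, so rename one when writing it up).
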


\begin{proof}
Write $f(x) = \max_{j \in J} ( a_j + \langle x, z_j \rangle )$ with
$z_j \in \N^N$, $|z_j|_1 \le d$. Since permutation matrices are
orthogonal, $\langle g^{-1} \cdot x, z \rangle = \langle x, g \cdot z \rangle$,
so invariance gives
\[
f(x) \;=\; \max_{g \in G} f(g^{-1} \cdot x)
      \;=\; \max_{j \in J} \bigl( a_j + \max_{g \in G} \langle x, g \cdot z_j \rangle \bigr)
      \;=\; \max_{j \in J} \bigl( a_j + f_{z_j}(x) \bigr).
\]
For the second statement: by the normal form, equality of all such
traces at $x$ and $y$ forces equality of every $G$-invariant tropical
polynomial of degree at most $d$; conversely each such trace is itself
one.
\end{proof}

\begin{theorem}[rigidity of low-degree alternating invariants]\label{th:alternating}
Let $N \ge 2$. Every $A_N$-invariant tropical polynomial on $\R^N$ of
degree less than $\binom N2$ is $S_N$-invariant. The threshold
is exact: the $A_N$-trace $f_{z^\ast}$ with
$z^\ast = (0, 1, \dots, N-1)$ has degree $\binom N2$ and satisfies
$f_{z^\ast}(x) \ne f_{z^\ast}(\rho \cdot x)$ for every transposition
$\rho$ and every $x$ with pairwise distinct entries.
\end{theorem}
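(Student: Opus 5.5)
The plan is to reduce to traces via Lemma~\ref{lem:tracenf}, and then decide the question for a single trace by looking at whether the template has a repeated entry.

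\emph{Rigidity.} Let $f$ be $A_N$-invariant of degree $d < \binom N2$. By Lemma~\ref{lem:tracenf}, $f = \max_j (a_j + f_{z_j})$ with $A_N$-traces $f_{z_j}$, where $z_j \in \N^N$ and $|z_j|_1 \le d$. So it suffices to show that each such trace $f_z$ is $S_N$-invariant.

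First, the entries of $z$ cannot be pairwise distinct. Distinct entries from $\N$ sum to at least $0 + 1 + \dots + (N-1) = \binom N2 > d$. Hence $z_i = z_k$ for some $i \ne k$, and the transposition $\tau = (i\,k)$ fixes $z$.

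Next, $S_N = A_N \cup A_N\tau$. Therefore $S_N \cdot z = A_N \cdot z \cup A_N \cdot (\tau z) = A_N \cdot z$. The $A_N$-trace $\max_{g \in A_N} \langle x, g\cdot z\rangle$ thus coincides with the $S_N$-trace, which is $S_N$-invariant. A maximum of $S_N$-invariant functions is $S_N$-invariant, so $f$ is $S_N$-invariant. For $N = 2$ the claim is trivial, since degree $0$ means $f$ is constant.

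\emph{Exactness.} Fix $z^\ast = (0,1,\dots,N-1)$, an $x$ with pairwise distinct entries, and a transposition $\rho$. Since $\langle \rho x, g z^\ast\rangle = \langle x, \rho g z^\ast\rangle$, we get
\[
f_{z^\ast}(x) = \max_{g \in A_N} \langle x, g z^\ast\rangle,
\qquad
f_{z^\ast}(\rho\cdot x) = \max_{h \in S_N \setminus A_N} \langle x, h z^\ast\rangle .
\]
So the two values are the maxima of $\langle x, \pi z^\ast\rangle$ over the even and over the odd permutations, respectively.

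Both $x$ and $z^\ast$ have pairwise distinct entries. By the strict rearrangement inequality, $\langle x, \pi z^\ast\rangle$ has a unique maximizer $\pi_0$ over $S_N$: the permutation that orders $\pi_0 z^\ast$ like $x$. Any other $\pi$ differs from $\pi_0$ by swapping some pair that is then out of order, and this strictly lowers the sum.

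The maximizer $\pi_0$ lies in exactly one of the two cosets of $A_N$. That coset's maximum equals the global maximum, and the other coset's maximum is strictly smaller. Hence $f_{z^\ast}(x) \ne f_{z^\ast}(\rho\cdot x)$.

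For the degree: $f_{z^\ast}$ has a representation with all monomials of degree $|z^\ast|_1 = \binom N2$, so its degree is at most $\binom N2$. It is not $S_N$-invariant, so by the first part its degree cannot be smaller. Hence its degree is exactly $\binom N2$.

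The main obstacle is the subtle point that degree is defined over all representations. This is handled cleanly by Lemma~\ref{lem:tracenf} in the first part and by the rigidity statement itself in the exactness part. The strict uniqueness of the rearrangement maximizer is routine but is the one place where distinctness of the entries of $x$ is essential.
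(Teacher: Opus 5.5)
Your proof is correct and follows essentially the paper's route. You reduce to traces via Lemma~\ref{lem:tracenf}, use a repeated template entry to get a transposition $\tau$ with $\tau\cdot z = z$ and hence $A_N\cdot z = S_N\cdot z$ (the paper writes the same fact as $\sigma A_N\rho = A_N$ for odd $\sigma$), and obtain exactness from the unique rearrangement maximizer lying in exactly one coset of $A_N$. Your explicit check that $f_{z^\ast}$ has degree exactly $\binom N2$ (at most by its representation, at least by the rigidity part) spells out a step the paper leaves implicit. One wording fix in the uniqueness step: say that every $\pi$ whose ordering does not match that of $x$ admits a strictly improving swap, since not every $\pi \ne \pi_0$ differs from $\pi_0$ by a single swap.
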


\begin{proof}
By Lemma~\ref{lem:tracenf} it suffices to prove that the $A_N$-trace $f_z$ is
$S_N$-invariant whenever $z \in \N^N$ has $|z|_1 < \binom N2$. Such a $z$
has two equal entries, since $N$ pairwise distinct nonnegative integers
sum to at least $0 + 1 + \cdots + (N-1) = N(N-1)/2 = \binom N2$. Let
$\rho \in S_N$ be a transposition exchanging two positions carrying equal
entries of $z$, so that $\rho \cdot z = z$; note
$h \cdot z = (h\rho) \cdot z$ for every $h \in S_N$. Let
$\sigma \in S_N$ be odd. Then
\[
f_z(\sigma^{-1} \cdot x)
 \;=\; \max_{g \in A_N} \langle x, (\sigma g) \cdot z \rangle
 \;=\; \max_{h \in \sigma A_N} \langle x, (h \rho) \cdot z \rangle
 \;=\; \max_{k \in \sigma A_N \rho} \langle x, k \cdot z \rangle
 \;=\; f_z(x),
\]
where the last equality holds because $A_N$ is normal and $\sigma\rho$ is
even, so $\sigma A_N \rho = A_N \sigma \rho = A_N$. As even $\sigma$
preserve $f_z$ by definition, $f_z$ is $S_N$-invariant.

For the threshold: let $x$ have pairwise distinct entries and let
$\rho$ be a transposition. Since $z^\ast$ also has pairwise distinct
entries, the maximum of $\langle x, g \cdot z^\ast \rangle$ over
$g \in S_N$ is attained at a unique $g^\ast \in S_N$: any $g$ that does
not pair the entries of $x$ and $z^\ast$ in matching sorted order admits
an improving swap. Now
$f_{z^\ast}(x) = \max_{g \in A_N} \langle x, g \cdot z^\ast \rangle$ is
the maximum over the even coset, while
$f_{z^\ast}(\rho \cdot x) = \max_{g \in A_N} \langle x, (\rho^{-1} g) \cdot z^\ast \rangle$
is the maximum over the odd coset; the coset containing $g^\ast$ attains
the global maximum and the other coset stays below it, so the
two values differ.
\end{proof}

\begin{corollary}[the quadratic degree bound is optimal]\label{cor:anlower}
Every family of $A_N$-invariant tropical polynomials that separates
orbits --- in particular, every generating family --- contains a
member of degree at least $\binom N2$. The bound is attained:
the $N + 1$ invariant tropical polynomials
$\varepsilon_1, \dots, \varepsilon_N, f_{z^\ast}$ with
$z^\ast = (0, 1, \dots, N-1)$ separate orbits, and their degrees
are at most $\max\{N, \binom N2\}$.
\end{corollary}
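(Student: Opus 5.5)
The first assertion follows from Theorem~\ref{th:alternating}: if a family of $A_N$-invariant tropical polynomials has all members of degree less than $\binom N2$, then every member is $S_N$-invariant. Fix any $x$ with pairwise distinct entries and any transposition $\rho$. Every member then takes the same value at $x$ and at $\rho\cdot x$. But $[x]\ne[\rho\cdot x]$ as $A_N$-orbits, because $x$ has trivial $S_N$-stabilizer and $\rho\notin A_N$. So the family does not separate. The parenthetical about generating families is handled exactly as in the proof of Corollary~\ref{cor:gencount}. Any tropical rational expression in generators is a function of their values, and the $A_N$-invariants do separate orbits (e.g.\ by Corollary~\ref{cor:count}). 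Hence a generating family must separate, and the degree bound applies to it.

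For attainment, I would show directly that $\varepsilon_1,\dots,\varepsilon_N,f_{z^\ast}$ separate $A_N$-orbits. Suppose $x,y$ agree on all of them. Since $\varepsilon_1,\dots,\varepsilon_N$ recover the sorted vector, $x^\downarrow=y^\downarrow$, so $y=\pi\cdot x$ for some $\pi\in S_N$. If $\pi$ is even, then $[x]=[y]$ under $A_N$. If $\pi$ is odd, I split into two cases.

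If $x$ has two equal entries, with a transposition $\rho$ swapping them, then $\rho\cdot x=x$. So $y=\pi\rho\cdot x$ with $\pi\rho$ even, and again $[y]=[x]$.

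If $x$ has pairwise distinct entries, write $\pi=\rho\,\pi'$ with $\rho$ a transposition and $\pi'$ even. Since $f_{z^\ast}$ is $A_N$-invariant,
\[
f_{z^\ast}(y)=f_{z^\ast}(\rho\cdot(\pi'\cdot x)).
\]
The point $x'=\pi'\cdot x$ has pairwise distinct entries, so the threshold clause of Theorem~\ref{th:alternating} gives
\[
f_{z^\ast}(\rho\cdot x')\ne f_{z^\ast}(x')=f_{z^\ast}(x),
\]
contradicting agreement on $f_{z^\ast}$. So this case cannot occur.

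Degrees: $\varepsilon_k$ has degree $k\le N$, and $f_{z^\ast}$ has degree $|z^\ast|_1=\binom N2$. This gives the bound $\max\{N,\binom N2\}$.

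I expect no step to be a real obstacle. The point needing the most care is the odd-$\pi$ case split: $x$ may have repeated entries, and the distinct-entry case requires transporting the transposition correctly through the $A_N$-invariance. For $N=2$, $A_N$ is trivial and $\binom N2=1$. The argument still goes through, since $f_{z^\ast}(x)=x_2$ separates there.
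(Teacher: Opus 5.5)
Your proposal is correct and follows essentially the same route as the paper. The lower bound comes from Theorem~\ref{th:alternating} applied to the pair $x$, $\rho\cdot x$ with $x$ having distinct entries, and generating families are reduced to separating ones as in Corollary~\ref{cor:gencount}; attainment uses $\varepsilon_1,\dots,\varepsilon_N$ to recover the multiset, then splits into repeated versus distinct entries and invokes the threshold clause. Your even/odd bookkeeping with $y=\pi\cdot x$ is just a rephrasing of the paper's observation that, for distinct entries, exactly the two orbits $[x]$ and $[\rho\cdot x]$ share the multiset.
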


\begin{proof}
Let $x$ have pairwise distinct entries and let $\rho$ be a
transposition. The stabilizer of $x$ in $S_N$ is trivial and $\rho$ is
odd, so $\rho \cdot x \notin A_N \cdot x$. By Theorem~\ref{th:alternating},
every $A_N$-invariant tropical polynomial of degree less than
$\binom N2$ is $S_N$-invariant and hence takes equal values at $x$ and
$\rho \cdot x$; a family separating these two orbits must therefore
contain a member of degree at least $\binom N2$. Generating families
separate orbits, as in the proof of Corollary~\ref{cor:gencount}, so the same
bound applies to them.

For the displayed family: the values
$\varepsilon_1(x), \dots, \varepsilon_N(x)$ determine the multiset of
entries of $x$ (Lemma~\ref{lem:marg} for $r = 1$). If this multiset has a
repeated entry, the $S_N$-stabilizer of $x$ contains a transposition ---
an odd element --- so $A_N \cdot x = S_N \cdot x$ and the multiset alone
determines the orbit. If the entries are pairwise distinct, exactly two
orbits share the multiset, namely $[x]$ and $[\rho \cdot x]$ for
any transposition $\rho$, and $f_{z^\ast}$ takes distinct values on them
by Theorem~\ref{th:alternating}. In either case the values of the family
determine the orbit.
\end{proof}

\begin{remark}[scope of the obstruction]\label{rem:anscope}
(i) The proof of Theorem~\ref{th:alternating} shows, more generally: if
$\rho \notin G$ is a transposition normalizing $G$, then every $G$-trace
whose template has equal entries in the two positions exchanged by
$\rho$ ($\rho \cdot z = z$) is invariant under the group
$\langle G, \rho \rangle$ generated by $G$ and $\rho$, hence blind to the coset move
$x \mapsto \rho \cdot x$. What makes $A_N$ extremal is that it has index
two and every transposition lies in the unique nontrivial coset, so a
template with two equal entries, wherever they lie, is blind to the
\emph{same} pair of orbits $([x],\, [\rho \cdot x])$. Thresholds far below $\binom N2$ occur: for $G = S_N$,
already $\varepsilon_1, \dots, \varepsilon_N$ separate orbits in
degree $N$ (Lemma~\ref{lem:marg} for $r = 1$). As a concrete instance
of Theorem~\ref{th:alternating}, the integer pair $x = (1, 2, 3, 4)$,
$y = (2, 1, 3, 4)$ lies in two distinct $A_4$-orbits, every $A_4$-trace
of degree at most $5$ takes equal values at $x$ and $y$, and each of the
$24$ traces with template a permutation of $(0,1,2,3)$ separates them,
as in the proof of the theorem. Low-degree
non-separation is not confined to $A_N$: for the Frobenius group
$F_{20} = \langle (1\,2\,3\,4\,5),\ (2\,3\,5\,4) \rangle \le S_5$, the
integer pair $x = (1, 3, 2, 2, 0)$, $y = (1, 2, 2, 3, 0)$ lies in two
distinct $F_{20}$-orbits: an element mapping $x$ to $y$ would fix the
positions of the values $1$ and $0$, and a nonidentity element of a
Frobenius group fixes at most one point. No $F_{20}$-trace of degree
at most $5 = N$ separates the pair --- a finite check over the $16$
classes, up to the
$F_{20}$-action, of primitive templates of degree at most $5$, scaling
covering the rest --- while the
degree-$6$ trace with template $(0,1,2,2,1)$ takes the values $13$ at
$x$ and $12$ at $y$; no transposition normalizes
$F_{20}$, so this pair is not
an instance of the mechanism of Theorem~\ref{th:alternating}.
(ii) Negative exponents do not change the quadratic order. Call a finite
maximum of affine functions with slopes in $\Z^N$ a tropical
\emph{Laurent} polynomial, of degree the largest
$|z|_1 = \sum_{i \in \{1, \dots, N\}} |z_i|$ over its slopes $z$;
Lemma~\ref{lem:tracenf} and its proof apply verbatim. A vector $z \in \Z^N$
with pairwise distinct entries has $|z|_1 \ge \lfloor N^2/4 \rfloor$, so
the argument of Theorem~\ref{th:alternating} shows that every $A_N$-invariant
tropical Laurent polynomial of degree less than $\lfloor N^2/4 \rfloor$
is $S_N$-invariant: negative exponents improve the constant from
$\tfrac12$ to $\tfrac14$ but not the exponent of $N$.
(iii) Corollary~\ref{cor:anlower} strengthens Remark~\ref{rem:countvsdegree} for
$G = A_N$: a separating family of linear size and polynomially bounded
degree, if it exists, must contain a member of degree at least
$\binom N2$.
\end{remark}

\subsection*{Declaration of competing interest}

The author declares no competing interests.

\subsection*{Declaration of generative AI and AI-assisted technologies in
the manuscript preparation process}

During the preparation of this work the author used Claude (Anthropic) in
order to refine the exposition and to check the arguments. After using
this tool, the author reviewed and edited the content as needed and takes
full responsibility for the content of the published article.

\subsection*{Data availability}

No data was used for the research described in the article.

\end{document}